\newtheorem{theorem}{Theorem}[section]
\newtheorem{conjecture}[theorem]{Conjecture}
\newtheorem{problem}[theorem]{Problem}
\newtheorem{lemma}[theorem]{Lemma}
\newtheorem{corollary}[theorem]{Corollary}
\theoremstyle{definition}
\newtheorem{definition}[theorem]{Definition}
\def\VEC#1#2#3{#1_{#2},\ldots,#1_{#3}}
\def\SE#1#2#3{\sum_{#1=#2}^{#3}}
\def\FR#1#2{\frac{#1}{#2}}
\def\FL#1{\left\lfloor{#1}\right\rfloor}
\def\NN{{\mathbb N}}
\def\C#1{\left | #1 \right |}    
\def\cD{{\mathcal D}}
\def\cF{{\mathcal F}}
\def\hk{{\hat k}}
\begin{document}

\title{Acyclic graphs with at least $2\ell+1$ vertices
are $\ell$-recognizable}

\author{
Alexandr V. Kostochka\thanks{University of Illinois at Urbana--Champaign,
Urbana IL, and Sobolev Institute of Mathematics, Novosibirsk,
Russia: \texttt{kostochk@math.uiuc.edu}.  Research supported in part by NSF
grant DMS-1600592 and grants 18-01-00353A and 19-01-00682  of the Russian
Foundation for Basic Research.}\,,
Mina Nahvi\thanks{University of Illinois at Urbana--Champaign,
Urbana IL: \texttt{mnahvi2@illinois.edu}.}\,,
Douglas B. West\thanks{Zhejiang Normal Univ., Jinhua, China
and Univ.\ of Illinois at Urbana--Champaign, Urbana IL:
\texttt{dwest@illinois.edu}.  Supported by National Natural Science Foundation
of China grants NSFC 11871439, 11971439, and U20A2068.}\,,
Dara Zirlin\thanks{University of Illinois at Urbana--Champaign, Urbana IL 61801:
\texttt{zirlin2@illinois.edu}.  Research supported in part by Arnold O.
Beckman Campus Research Board Award RB20003 of the University of Illinois at
Urbana-Champaign.}
}

\date{\today}
\maketitle

\baselineskip 16pt

\begin{abstract}
The {\it $(n-\ell)$-deck} of an $n$-vertex graph is the multiset of subgraphs
obtained from it by deleting $\ell$ vertices.  A family of $n$-vertex graphs is
{\it $\ell$-recognizable} if every graph having the same $(n-\ell)$-deck as a
graph in the family is also in the family.  We prove that the family of 
$n$-vertex graphs having no cycles is $\ell$-recognizable when $n\ge2\ell+1$
(except for $(n,\ell)=(5,2)$).  It is known that this fails when $n=2\ell$.
\end{abstract}

\section{Introduction}

The {\it $k$-deck} of a graph is the multiset of $k$-vertex induced subgraphs.
We write this as the $(n-\ell)$-deck when the graph has $n$ vertices and the
focus is on deleting $\ell$ vertices.  An $n$-vertex graph is
{\it $\ell$-reconstructible} if it is determined by its $(n-\ell)$-deck.
It is an elementary observation, via a counting argument, that the $k$-deck of
a graph always determines its $(k-1)$-deck.  Therefore, an enhancement of the
Reconstruction Problem is to find for each graph the maximum $\ell$ such that 
it is $\ell$-reconstructible.  Kelly~\cite{Kel2} extended the classical
Reconstruction Conjecture of Kelly~\cite{Kel1} and Ulam~\cite{U} as follows:

\begin{conjecture}[\rm\cite{Kel2}]
For $l\in\NN$, there exists a threshold $M_\ell$ such that every graph with
at least $M_\ell$ vertices is $\ell$-reconstructible.
\end{conjecture}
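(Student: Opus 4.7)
The plan is to approach this statement as a strengthening of the classical Reconstruction Conjecture, which is the case $\ell=1$ (with $M_1=3$ conjectured) and remains open. I would therefore attempt a proof by induction on $\ell$, conditional on whatever is known for $\ell=1$, and aim to extract an explicit threshold $M_\ell$ as a function of $\ell$. The first step is to assemble a rich collection of invariants recoverable from the $(n-\ell)$-deck via Kelly-style counting identities: the multiset of subgraph counts $s(H,G)$ for all $H$ with $|V(H)|\le n-\ell$, the degree sequence, the edge count, and the number of components of each size up to $n-\ell$. When $n$ is much larger than $\ell$, this collection is large enough that any two graphs sharing a deck must agree on every local statistic.

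Next I would seek a structural dichotomy that the deck itself can decide and that splits the problem into tractable sub-cases. For $\ell=1$, Kelly showed that disconnected graphs, regular graphs, and graphs with an extremal vertex are reconstructible. For general $\ell$ one aims to prove threshold analogues: that once $n\ge f(\ell)$ the deck determines whether $G$ is connected, that a disconnected $G$ can be reassembled from component decks, and that graphs with a pendant vertex, a universal vertex, or large automorphism group can be handled by direct combinatorial arguments. All of these thresholds must be tracked explicitly so that they combine into an inductive bound on $M_\ell$.

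After these reductions, what remains is a connected graph with no extreme structure, and here the inductive step has to use the freedom of removing $\ell$ vertices at once. Concretely, given two candidate reconstructions $G$ and $G'$ with the same $(n-\ell)$-deck, I would try to exhibit an $(n-\ell)$-vertex induced subgraph whose multiplicity differs between $G$ and $G'$; a probabilistic or Ramsey-type argument should produce such a distinguishing subgraph whenever $\ell$ is large enough relative to a suitable measure of similarity between $G$ and $G'$. A quantitative version of this dichotomy -- distinguishable pairs are separated by a small sub-deck, indistinguishable pairs are forced to be isomorphic -- would furnish the inductive step.

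The main obstacle is that the above is, honestly, a research program rather than a proof. Because $\ell=1$ coincides with the Reconstruction Conjecture, any complete resolution requires a breakthrough on an eighty-year-old problem; and even granting that base case, the final step -- producing a distinguishing $(n-\ell)$-vertex subgraph for arbitrary near-isomorphic candidate reconstructions -- is precisely the point at which all existing techniques, including those of the present paper, stall. A realistic unconditional target is the conjecture restricted to structured families such as forests, trees, planar graphs, or graphs of bounded degree, which is the route the present paper takes for acyclic graphs.
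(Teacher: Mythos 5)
The statement you were asked about is Kelly's conjecture, which the paper states as an open problem and does not prove; it has no proof in the paper to compare against. Your proposal is, as you yourself concede in your final paragraph, a research program rather than a proof: the base case $\ell=1$ is the classical Reconstruction Conjecture, and the key inductive step (producing a distinguishing $(n-\ell)$-card for two non-isomorphic candidates) is exactly the open part. So the proposal does not establish the statement, but it is not in conflict with the paper either, since the paper only proves a much weaker partial result in its direction, namely that acyclicity is $\ell$-recognizable for $n\ge 2\ell+1$. Your sketch does correctly identify the tools the paper actually uses for that partial result -- Kelly/Greenwell--Hemminger-style counting of induced subgraphs and maximal members of an absorbing family (Lemma~\ref{counting}), recognizability of connectedness and of degree lists, and restriction to structured families such as forests -- so the diagnosis in your last paragraph is accurate: the realistic target is the restricted version, which is what the paper delivers. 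The one concrete inaccuracy worth flagging is the suggestion that ``when $n$ is much larger than $\ell$ ... any two graphs sharing a deck must agree on every local statistic'': the counting identities recover $s(H,G)$ only for $H$ with at most $n-\ell$ vertices, and agreement on all such counts is precisely equivalent to having the same deck, so this step gives no new information and cannot by itself drive an induction on $\ell$.
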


Many reconstruction arguments have two parts.  First, one proves that the deck
determines that the graph is in a particular class or has a particular property.
When the $(n-\ell)$-deck determines this, the property is
{\it $\ell$-recognizable}.  Separately, using the knowledge that every
reconstruction from the deck has that property, one determines that only one
such graph has that deck.  That is, one proves that the family is {\it weakly
$\ell$-reconstructible}, meaning that no two graphs in the family have the same
deck.  Bondy and Hemminger~\cite{BH} introduced this separation into two steps
for the case $\ell=1$.

Here, toward $\ell$-reconstructibility of trees, we consider
$\ell$-recognizability of acyclic graphs.  We prove the following theorem.

\begin{theorem}
For $n\ge2\ell+1$ (except $(n,\ell)=(5,2)$), the $(n-\ell)$-deck of an
$n$-vertex graph determines whether the graph contains a cycle.
\end{theorem}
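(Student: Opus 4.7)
The plan is this. Suppose $G$ is an $n$-vertex graph whose $(n-\ell)$-deck coincides with that of an $n$-vertex forest $F$; we aim to derive that $G$ is also a forest. Two such graphs share the number of edges and, more generally, the number of induced copies of every graph on at most $n-\ell$ vertices, via the standard Kelly-type inversion applied to the deck.

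The first step is a girth bound. If $G$ contained any cycle on at most $n-\ell$ vertices, extending its vertex set to any $(n-\ell)$-vertex superset would produce a card of $G$ containing a cycle, whereas every card of $F$ is a forest. This would contradict equality of the decks, so every cycle of $G$ has length at least $n-\ell+1$. Fixing a shortest cycle $C$ of length $g$ in $G$ and letting $R = V(G) \setminus V(C)$, we have $|R| \leq \ell-1$, the cycle $C$ is induced, and $G[R]$ is a forest. Since $g \geq n-\ell+1 > n/2$ when $n \geq 2\ell+1$, no two cycles of $G$ are vertex-disjoint, so the cyclomatic number $b(G) = |E(G)|-n+c(G)$ is small: either $G$ is unicyclic in its cycle-containing component, or $G$ contains a $\theta$-subgraph whose three internally disjoint paths each have roughly $(n-\ell)/2$ internal vertices, in which case $b(G)=2$.

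With the structure so constrained, the next step is to extract a deck-determined invariant that must differ for $G$ and $F$. Since $|E(G)| = |E(F)| = n - c(F)$ and $b(G)\geq 1$, the identity $|E(G)| = n - c(G) + b(G)$ forces $c(G) = c(F) + b(G) > c(F)$; that is, $G$ must have strictly more components than $F$. The plan is to turn this discrepancy into a contradiction by combining deck-determined data: the degree sequence (recoverable in our range of $n-\ell$), counts of induced copies of small paths and trees, and the number of isolated vertices. For instance, the cycle $C$ of length $g$ contributes exactly $g$ induced copies of $P_k$, via consecutive arcs, for each $2 \leq k \leq g-1$, while every induced path in $F$ lives entirely inside a tree component; balancing these contributions against the required component counts and degree sequences will yield a contradiction.

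The main obstacle will be a delicate case analysis in the tight regime where $g = n-\ell+1$ and $C$ has near-maximally many attached branches, together with the $\theta$-subgraph case (which can only occur for $n$ close to $2\ell+1$). In this regime each individually determined invariant is nearly equal for $G$ and $F$, and uncovering a discrepancy will require careful bookkeeping. This same regime is where the exceptional pair $(n,\ell)=(5,2)$ lives: the graph $C_4+K_1$ and the tree obtained from $P_4$ by attaching a pendant edge to an interior vertex have the same $3$-deck, showing that both the hypothesis $n\geq 2\ell+1$ and the stated exception are genuinely needed.
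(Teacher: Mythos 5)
The proposal correctly establishes the easy opening move (every cycle in a non-acyclic reconstruction has length at least $n-\ell+1$, hence the cycle occupies all but at most $\ell-1$ vertices), but everything after that is a plan rather than a proof, and the plan as stated does not close. The decisive step is deferred twice: ``balancing these contributions against the required component counts and degree sequences will yield a contradiction'' and ``uncovering a discrepancy will require careful bookkeeping.'' That bookkeeping \emph{is} the theorem. Worse, the invariants you propose to balance are not all available: the number of components is not known to be determined by the $(n-\ell)$-deck in this range of $n$ --- indeed the paper points out that $\ell$-recognizability of connectedness is open below the enormous threshold $n>2\ell^{(\ell+1)^2}$, and poses the relevant special cases as an open problem. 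You correctly deduce from the edge count that a non-acyclic reconstruction $H$ must have more components than the forest $F$, but you cannot read off $c(G)$ from the deck to exploit this, and raw counts of induced small paths and the degree sequence are not shown (and are not in general enough) to separate $F$ from $H$; for instance, the number of $(n-\ell)$-vertex path cards can genuinely coincide, which is why the paper needs Lemma~\ref{spider} and a long case analysis even after much stronger structural information is in hand.

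The paper's actual mechanism is a specific, nontrivially chosen invariant: let $\hk$ be the minimum radius among cards and $k=\hk-1$; then the family of $k$-vines (trees of diameter $2k$) is an absorbing family in any reconstruction (girth at least $2k+3$), so Lemma~\ref{counting} recovers the number of $k$-centers, and similarly the number of $k$-central edges, from the deck. A marking argument bounds the number of $k$-centers of a forest above by $1+d_C+\ell$ using a short card $C$, while every vertex of the long cycle in $H$ is a $k$-center, giving a lower bound of about $n-\ell+1+d-2$; comparing forces $n\le 2\ell+1$, and the boundary case $n=2\ell+1$ then needs the spider lemma and several further lemmas. None of this machinery --- the choice of $k$ tied to the minimum card radius, the absorbing-family inversion, the marking bound --- appears in your proposal, and your substitute invariants either are not deck-computable or are not argued to distinguish the two reconstructions. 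So there is a genuine gap at the heart of the argument, not merely omitted routine detail.
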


Our proof is constructive; the information we need in order to confirm that
every graph having $(n-\ell)$-deck $\cD$ is acyclic is computed from the deck.

Since the $(n-\ell)$-deck determines the $2$-deck, in this setting we also
know the number of edges.  This yields the following corollary.

\begin{corollary}
For $n\ge2\ell+1$ (except $(n,\ell)=(5,2)$), the $(n-\ell)$-deck of an
$n$-vertex graph determines whether the graph is a tree.
\end{corollary}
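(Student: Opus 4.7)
The plan is to combine Theorem 1.2 with the well-known fact that the $(n-\ell)$-deck determines the $2$-deck (iteratively applying the counting argument that the $k$-deck determines the $(k-1)$-deck). Since $n$ itself is recovered from the size $\binom{n}{\ell}$ of the deck, and the $2$-deck records, for each pair of vertices, whether they induce an edge or a non-edge, the number $e(G)$ of edges is determined by the deck $\cD$.

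Given the deck $\cD$, first compute $n$ from $|\cD|$ and then compute $e(G)$ from the $2$-deck as just described. Next, invoke Theorem 1.2: under the hypothesis $n\ge 2\ell+1$ with $(n,\ell)\ne(5,2)$, the deck $\cD$ determines whether $G$ is acyclic. A graph is a tree if and only if it is both acyclic and has exactly $n-1$ edges, so declare $G$ to be a tree precisely when the acyclicity test of Theorem 1.2 succeeds and the edge count yields $e(G)=n-1$. Both pieces of information are extracted from $\cD$, so the property of being a tree is $\ell$-recognizable in this range.

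There is essentially no obstacle here; the corollary is a one-line consequence once Theorem 1.2 is in hand, with the only substantive ingredient being the standard reduction from the $(n-\ell)$-deck to the $2$-deck and hence to $e(G)$. One may write this formally as: if $G$ and $H$ are $n$-vertex graphs with the same $(n-\ell)$-deck, then $e(G)=e(H)$, and by Theorem 1.2 either both are acyclic or neither is; hence $G$ is a tree iff $H$ is a tree.
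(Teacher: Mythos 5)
Your proof is correct and matches the paper's own (one-line) justification: the $(n-\ell)$-deck determines the $2$-deck and hence $e(G)$, and combined with Theorem 1.2 a graph is recognized as a tree exactly when it is acyclic with $e(G)=n-1$. No issues.
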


Spinoza and West~\cite{SW} determined for every graph $G$ with maximum degree
at most $2$ the maximum $\ell$ such that $G$ is $\ell$-reconstructible.  Their
full result is quite complicated to state, but a special case is that
for $n\ge2\ell+1$ (except $(n,\ell)=(5,2)$), every $n$-vertex graph with
maximum degree at most $2$ is $\ell$-reconstructible.  A path with $2\ell$
vertices has the same $\ell$-deck as the disjoint union of an $(\ell+1)$-cycle
and a path with $\ell-1$ vertices, as shown in~\cite{SW}, so the result
of~\cite{SW} and the result in the present paper are both sharp.

N\'ydl~\cite{N90} conjectured that trees with at least $2\ell+1$ vertices are
weakly $\ell$-reconstructible.  This conjecture would be sharp, since N\'ydl
presented two trees with $2\ell$ vertices having the same $\ell$-deck.  The two
trees are obtained from a path with $2\ell-1$ vertices by adding one leaf,
either to the central vertex of the path or to one of its neighbors.  Kostochka
and West~\cite{KW} used the results of~\cite{SW} to give a short proof that
these two trees have the same $\ell$-deck.  With our result, N\'ydl's
conjecture can be strengthened as follows.

\begin{conjecture}
For $\ell\ne2$, every tree with at least $2\ell+1$ vertices is
$\ell$-reconstructible.
\end{conjecture}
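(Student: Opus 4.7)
The plan is to upgrade the $\ell$-recognizability of being a tree, supplied by Theorem 1.2 together with the edge-count obtainable from the $2$-deck, to full $\ell$-reconstructibility of the tree's isomorphism type. Since Theorem 1.2 already guarantees that every graph sharing the $(n-\ell)$-deck of an $n$-vertex tree $T$ (with $n\ge 2\ell+1$) is itself an $n$-vertex tree, what remains---and what Conjecture 1.4 really asks---is \emph{weak $\ell$-reconstructibility within the class of trees}: no two non-isomorphic $n$-vertex trees have the same $(n-\ell)$-deck. Any attack on the conjecture should therefore focus entirely on distinguishing two hypothetical $n$-vertex trees $T,T'$ that agree on the $(n-\ell)$-deck.

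The principal tool would be the extended Kelly lemma: the $(n-\ell)$-deck determines, for every graph $H$ with $|V(H)|\le n-\ell$, the exact number of induced copies of $H$ in $T$. Specializing to induced subforests, this yields the counts of all small subtrees and subforests of $T$, and from these one can extract the degree sequence, the diameter, the leaf count, and the multiplicities of the various ``branch types'' hanging off vertices of small eccentricity. I would then induct on $n$, with base case $\ell=1$ being Kelly's classical theorem that trees are reconstructible from the $(n-1)$-deck. For the inductive step, the aim is to (i) use Kelly counts to isolate a distinguished peripheral substructure $S$ of $T$---for instance, a longest path together with its attached leaves, or a unique vertex of maximum eccentricity together with its pendant branch---(ii) pin down $S$ up to isomorphism from the deck, and (iii) peel it off so that the residual tree $T-S$ can be reconstructed from a derived deck computed from $\cD$, either by the inductive hypothesis or, when $T-S$ has maximum degree at most $2$, by the sharp path-and-cycle result of~\cite{SW}. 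Finally, one must reassemble $T$ by determining how $S$ was attached to $T-S$, using the $(n-\ell)$-vertex subgraphs that straddle the attachment point to identify the correct host vertex.

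The main obstacle, and the reason the conjecture remains open, is the thin regime of trees close to paths: caterpillars, spiders, and subdivided stars whose interior structure can differ by a single misplaced leaf. N\'ydl's sharp $n=2\ell$ counterexamples are exactly of this form---two caterpillars differing in the placement of one leaf near the center---so any proof at the threshold $n=2\ell+1$ must exploit the lone extra vertex in a nontrivial way. Near this threshold many Kelly-type counts coincide or cancel, so finer arguments will be needed: parity considerations, generating-function identities for counts of induced subpaths and subforests, or explicit enumeration of caterpillars of precisely the critical length. The exclusion $\ell\ne 2$ is itself a warning sign, reflecting that the small cases around $n=5$ produce additional coincidences (consistent with the $(n,\ell)=(5,2)$ exception in Theorem 1.2) that obstruct a uniform induction and would either need to be resolved directly or shown to yield genuine further exceptions before the general argument can be completed.
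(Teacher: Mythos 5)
The statement you are attempting is Conjecture 1.4, which the paper explicitly leaves \emph{open}: the paper proves only the recognizability half (Theorem 1.2, that the $(n-\ell)$-deck determines acyclicity when $n\ge2\ell+1$), cites Giles~\cite{Gil} for $\ell=2$ with threshold $6$, reports that the authors' companion work~\cite{KNWZ} needs $n\ge25$ already for $\ell=3$, and states that for $\ell\ge4$ it is not even known whether \emph{any} threshold exists. So there is no proof in the paper to compare yours against, and your text is not a proof either---you concede as much when you describe the near-path regime as ``the reason the conjecture remains open.'' What you have written is a research plan, and the entire content of the conjecture (weak $\ell$-reconstructibility of trees) is the part the plan does not carry out.

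Beyond that global gap, the concrete steps as stated would not go through. The induction ``on $n$, with base case $\ell=1$'' conflates the two parameters: decreasing $n$ with $\ell$ fixed never reaches Kelly's theorem, and decreasing $\ell$ changes which deck you hold. Step (iii) founders on the standard obstruction to peeling: if $S$ has $s$ vertices, the cards of $T$ that lie inside $T-S$ have $n-\ell=(n-s)-(\ell-s)$ vertices, so they constitute part of an $(\ell-s)$-deleted deck of the $(n-s)$-vertex tree $T-S$, not the deck your inductive hypothesis needs; worse, you cannot tell from $\cD$ which cards avoid $S$ without already knowing where $S$ sits, which is exactly what you are trying to determine. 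Finally, the extended Kelly counts you invoke are weakest precisely at the threshold $n=2\ell+1$: N\'ydl's caterpillar pair at $n=2\ell$ shows that all subtree counts on at most $\ell$ vertices can coincide for non-isomorphic trees, so any successful argument must extract strictly more than induced-subgraph multiplicities of small trees---for instance, counts of maximal structures in the spirit of the paper's $k$-vine machinery---and your proposal does not identify what that extra leverage would be.
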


When $\ell=2$, the correct threshold is $6$ rather than $5$, since the union of
a $4$-cycle and an isolated vertex has the same $3$-deck as the tree obtained
by subdividing one edge of the $4$-vertex star.  Giles~\cite{Gil} proved that
trees with at least six vertices are $2$-reconstructible.  For $\ell=1$, the
only non-acyclic $n$-vertex graph having no cycle with length at most $n-1$ is
the $n$-vertex cycle, distinguished by its number of edges, so $n\ge3$ suffices.
In~\cite{KNWZ}, the present authors proved that $n\ge25$ suffices when $\ell=3$.
For $\ell\ge4$, it is not yet known whether there is a threshold $T_\ell$ such
that $n$-vertex trees are $\ell$-reconstructible when $n\ge T_\ell$.

Besides acyclicity, another fundamental property of trees is connectedness.
Spinoza and West~\cite{SW} proved that connectedness is $\ell$-recognizable
for $n$-vertex graphs when $n>2\ell^{(\ell+1)^2}$.  This threshold is surely
too large.  Manvel~\cite{Man} proved that connectedness is $2$-recognizable
for graphs with at least six vertices, and the present authors~\cite{KNWZ2}
proved that connectedness is $3$-recognizable for graphs with at least seven
vertices.  Spinoza and West~\cite{SW} suggested that (except for
$(n,\ell)=(5,2)$), connectedness is recognizable for $n$-vertex graphs when
$n\ge2\ell+1$.

As a first step toward the threshold on $n$ for $\ell$-recognizability of 
connectedness, one can consider $n$-vertex graphs whose $(n-\ell)$-deck has
only acyclic cards.  With $n-\ell\ge2$, we know the number of edges in any
reconstruction.  Our result in this paper settles the question for graphs with
$n-1$ edges, where connectedness and acyclicity are equivalent.
This motivates more detailed questions.

\begin{problem}
For $c,\ell\in\NN$, determine the smallest thresholds $N_{\ell,c}$ and
$N'_{\ell,c}$ such that for all $n$-vertex graphs with $n+c$ edges whose
$(n-\ell)$-vertex induced subgraphs are all acyclic,

(a) if $n\ge N_{\ell,c}$, then the $(n-\ell)$-deck determines whether the graph
is connected, and

(b) if $n\ge N'_{\ell,c}$, then the graph is connected.

\noindent
The thresholds when the cards are not required to be acyclic are also unknown.
\end{problem}

When $c=1$, the fact that a graph with $p$ vertices and at least $p+2$ edges
has girth at most $\FL{(p+2)/2}$ (see Exercise 5.4.36 of~\cite{West}, for
example) can be used to prove $N'_{\ell,1}\le 2\ell$.  That is, when the
graph has $n+1$ edges and the cards in the $(n-\ell)$-deck are acyclic, every
reconstruction is connected.  The threshold of $2\ell$ vertices is sharp when
$\ell$ is even, by the graph with $2\ell-1$ vertices consisting of an isolated
vertex plus four paths of length $\ell/2$ with common endpoints.  However, it
is possible that the threshold $N_{\ell,1}$ for determining whether all
reconstructions are connected may be smaller.

For $c=0$, we believe $N_{\ell,0}=2\ell-1$; Zirlin~\cite{Z} has proved this
for sufficiently large $\ell$.  The threshold is at least this much because
$C_{2\ell-2}$ and the disjoint union $C_{\ell-1}+C_{\ell-1}$ have the same
deck.  Zirlin~\cite{Z} also proved $N_{\ell,0}\le 2\ell+1$ for $\ell\ge3$.

\section{General Tools}

The proof for the special case $n=2\ell+1$ requires additional care beyond
the general argument.  In this section we develop tools useful for all cases.

Let $\cD$ be the $(n-\ell)$-deck of an $n$-vertex graph $G$ (we henceforth just
call it the ``deck'').  We will assume $n>2\ell$.  The members of $\cD$ are the
``cards'' in the deck.  We begin with a notion generalizing the degree list.

\begin{definition}
Given a vertex $v$ in a graph $G$, the {\it $k$-ball} at $v$, written $U_k(v)$,
is the subgraph induced by all vertices within distance $k$ of $v$ in $G$.  
A {\it $k$-vine} is a tree with diameter $2k$.  A {\it $k$-center} in a
graph $G$ is a vertex $v$ that is the center of a $k$-vine.
\end{definition}

The term ``$k$-vine'' continues the botanical theme of terminology about trees;
a vine has a main path from which the rest grows.
When $G$ is a forest, the maximal $k$-vine at a $k$-center $v$ is the
$k$-ball at $v$.  If the $k$-ball at $v$ does not contain a path of length
$2k$, then $v$ is not a $k$-center.  Our general approach is to consider an
acyclic and a non-acyclic graph having the same $(n-\ell)$-deck, show that
they have the same number of $k$-centers for an appropriate $k$, and obtain a
contradiction by showing that they cannot have the same number of $k$-centers.

In order to count $k$-centers using the $(n-\ell)$-deck, we will count the
$k$-centers whose $k$-balls have each size.  The special case $k=1$ will yield
the vertex degrees.  The key point is uniqueness of the maximal $k$-vine
containing a particular $k$-vine.

\begin{lemma}\label{buttermax}
In a graph $G$ with girth at least $2k+2$, every induced $k$-vine is contained
in a unique maximal $k$-vine.
\end{lemma}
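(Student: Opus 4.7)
The plan is to identify the unique maximal $k$-vine containing $T$ as the $k$-ball $U_k(v)$, where $v$ is the unique center of $T$. Uniqueness then follows once one shows that $U_k(v)$ is itself a $k$-vine and that every induced $k$-vine containing $T$ is contained in $U_k(v)$.

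The main ingredient is a structural claim: under girth $\ge 2k+2$, the induced subgraph $U_k(v)$ is a tree. I would prove this by organizing the vertices of $U_k(v)$ into BFS layers $L_0,L_1,\dots,L_k$ from $v$ and ruling out any edge not descending from one layer to the next via a unique parent. An edge $uu'$ inside a single layer $L_j$ would give a closed walk $v\to u\to u'\to v$ of odd length at most $2k+1$, which forces a cycle of length at most $2k+1$; an edge $uu'$ with $u\in L_j$, $u'\in L_{j+1}$ that is not the BFS parent edge would reveal a second shortest $v$-$u'$ path through some $w\in L_j$, yielding a closed walk $v\to u\to u'\to w\to v$ of even length at most $2k$. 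Both alternatives contradict girth $\ge 2k+2$. Hence each vertex of $L_j$ (for $j\ge 1$) has a unique neighbor in $L_{j-1}$ and none in its own or later layers, so $U_k(v)$ is a tree. I expect this to be the main obstacle: in particular, excluding same-layer edges in $L_k$ is precisely where the bound $2k+2$, rather than $2k+1$, is essential, since such an edge produces a cycle of length $2k+1$.

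Granting the structural claim, each vertex of $U_k(v)$ has a shortest $G$-path to $v$ that lies entirely inside $U_k(v)$, so the eccentricity of $v$ in $U_k(v)$ is at most $k$ and the diameter of $U_k(v)$ is at most $2k$. Since $T\subseteq U_k(v)$ and paths in the tree $U_k(v)$ are unique, the diameter path of $T$ persists in $U_k(v)$, forcing the diameter to be exactly $2k$; thus $U_k(v)$ is a $k$-vine containing $T$. For uniqueness, if $T'$ is any induced $k$-vine containing $T$, then the length-$2k$ path joining the endpoints of a diameter of $T$ is also the unique $T'$-path between them and hence a diameter path of $T'$, so the unique center of $T'$ (the midpoint of any diameter of a tree with even diameter) is $v$. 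Every vertex of $T'$ is then within distance $k$ of $v$ in $T'$ and hence in $G$, giving $V(T')\subseteq V(U_k(v))$ and $T'\subseteq U_k(v)$ as induced subgraphs. So $U_k(v)$ is the unique maximal $k$-vine containing $T$.
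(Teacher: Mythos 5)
Your proof is correct and follows essentially the same route as the paper: the length-$2k$ path in the given $k$-vine forces any $k$-vine containing it to have the same center $v$, so the unique maximal one is $U_k(v)$. You additionally verify via BFS layers that $U_k(v)$ is itself a tree of diameter $2k$ under the girth hypothesis, a step the paper's proof leaves implicit; this is a worthwhile (and correctly executed) addition rather than a different approach.
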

\begin{proof}
Since $G$ has girth at least $2k+2$, every $k$-vine in $G$ is an induced
subgraph.  Since a $k$-vine $B$ contains a path $P$ of length $2k$, its
center $v$ is uniquely determined.  No $k$-vine with a center $w$ other
than $v$ contains $P$, because the distance from $w$ to one of the ends of
$P$ would exceed $k$.  Hence no $k$-vine with center $w$ contains $B$.
Thus the maximal $k$-vine containing $B$ can only be $U_k(v)$.
\end{proof}

Lemma~\ref{buttermax} will enable us to apply a general counting argument
that has been used in other contexts.  It combines ideas of Manvel~\cite{Man}
for $\ell$-reconstructibility and of Greenwell and Hemminger~\cite{GH} for
$1$-reconstructibility.  We follow the approach in a proof by Bondy and
Hemminger~\cite{BH} of the Greenwell--Hemminger result.

\begin{definition}
When $\cF$ is a class of graphs, an {\it $\cF$-subgraph} of a graph $G$ is an
induced subgraph of $G$ in $\cF$.  Let $s(F,G)$ denote the number of induced
subgraphs of $G$ isomorphic to $F$.  Let $m(F,G)$ denote the number of
occurrences of $F$ as a maximal $\cF$-subgraph of $G$.
An {\it absorbing family} for a graph $G$ is a family $\cF$ of graphs such that
every induced subgraph of $G$ belonging to $\cF$ lies in a unique maximal
induced subgraph of $G$ belonging to $\cF$.
\end{definition}

\begin{lemma}\label{counting}
Let $\cF$ be an absorbing family for an $n$-vertex graph $G$.  If $m(F,G)$ is
known for each $F\in\cF$ that has at least $n-\ell$ vertices, and the
$(n-\ell)$-deck of $G$ is known, then $m(F,G)$ can be determined from the
deck for all $F\in\cF$.
\end{lemma}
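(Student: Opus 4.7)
The plan is downward induction on $\C{V(F)}$. The base case $\C{V(F)}\ge n-\ell$ is handed to us by hypothesis (with $m(F,G)=0$ when $\C{V(F)}>n$), so it remains to handle $F\in\cF$ with $\C{V(F)}<n-\ell$, assuming inductively that $m(F',G)$ has already been determined for every $F'\in\cF$ with more vertices.

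My first step would be to recover $s(F,G)$ from the deck $\cD$ by the standard Kelly counting trick: each induced copy of $F$ in $G$ is an induced subgraph of exactly $\CH{n-\C{V(F)}}{n-\ell-\C{V(F)}}$ cards, so
$$s(F,G)=\frac{1}{\CH{n-\C{V(F)}}{n-\ell-\C{V(F)}}}\sum_{C\in\cD}s(F,C).$$
The binomial coefficient is nonzero because $\C{V(F)}\le n-\ell$, so $s(F,G)$ is determined by $\cD$. My second step would be to invoke the absorbing hypothesis: since $F\in\cF$, every induced copy of $F$ in $G$ is itself an $\cF$-subgraph and hence lies in a unique maximal $\cF$-subgraph of $G$. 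Conversely, each maximal $\cF$-subgraph of $G$ isomorphic to a fixed $F'\in\cF$ contains exactly $s(F,F')$ induced copies of $F$, each having that copy of $F'$ as its unique maximal container. Partitioning the induced copies of $F$ in $G$ by their maximal container yields
$$s(F,G)=\sum_{F'\in\cF}m(F',G)\cdot s(F,F').$$
Since $s(F,F)=1$, isolating the $F'=F$ term converts this into the recursion
$$m(F,G)=s(F,G)-\sum_{\substack{F'\in\cF\\\C{V(F')}>\C{V(F)}}}m(F',G)\cdot s(F,F'),$$
in which every quantity on the right is either computable directly from $\cD$ (the $s$-values) or known from the base case/inductive hypothesis (the $m$-values). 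Hence $m(F,G)$ is determined by the deck.

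The only real content is the absorbing decomposition of $s(F,G)$, and this is immediate once one unpacks the definition of an absorbing family, so I expect no genuine obstacle; the remainder is Kelly--Manvel bookkeeping of the same flavor as the Bondy--Hemminger argument cited just before the lemma. The induction terminates because $m(F',G)=0$ whenever $\C{V(F')}>n$, so only finitely many isomorphism types contribute to the sum.
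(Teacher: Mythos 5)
Your proof is correct and follows essentially the same route as the paper: the same decomposition $s(F,G)=\sum_{F'}s(F,F')\,m(F',G)$ over maximal $\cF$-subgraphs, solved recursively for $m(F,G)$ using the hypothesis for large $F'$ and Kelly-style counting for $s(F,G)$. The only cosmetic difference is that you induct downward on $\C{V(F)}$ (using that $s(F,F')\neq 0$ with $F'\not\cong F$ forces $\C{V(F')}>\C{V(F)}$), whereas the paper inducts on the ``depth'' of $F$ in the poset of $\cF$-subgraphs; both orderings make the same recursion well-founded.
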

\begin{proof}
For an $\cF$-subgraph $F$ of $G$, let the {\it depth} of $F$ be the maximum
length $k$ of a chain $\VEC F0k$ of $\cF$-subgraphs such that $F=F_0$
and each is an induced subgraph of the next.

By hypothesis, $m(F,G)$ is known for all $F\in \cF$ with at least $n-\ell$
vertices.  From the $(n-\ell)$-deck, we also know the $j$-deck of $G$ for
$j<n-\ell$.  Hence we know all the $\cF$-subgraphs of $G$.  From this we
can determine the chains of $\cF$-subgraphs in $G$, so we can compute the depth
of $F$; let it be $k$.  We prove the claim by induction on $k$.

Let $j=\C{F}$, and suppose $j<n-\ell$.  If $k=0$, then $m(F,G)$ is the number
of copies of $F$ in the $j$-deck of $G$.  For $k>0$, group the induced copies
of $F$ in $G$ by which $\cF$-subgraph $H$ of $G$ is the unique maximal
$\cF$-subgraph containing this copy of $F$.  Now
$$
s(F,G)=\sum_{H\in\cF}s(F,H)m(H,G).
$$

When $s(F,H)\ne0$ and $F\ne H$, every chain of $\cF$-subgraphs starting at
$H$ can be augmented by adding $F$ at the beginning, so $H$ has smaller depth
than $F$.  By the induction hypothesis, we know every quantity in the displayed
equation other than $m(F,G)$.  The computation for $m(F,G)$ is the same for
every graph having the same $(n-\ell)$-deck as $G$.
\end{proof}

For example, the family of connected graphs is absorbing for every graph.
We formalize this application here because it was stated incorrectly in the
paper by Kostochka and West~\cite{KW} and because it illustrates the technique
we use for $k$-vines.  The special case for $\ell=1$ was observed by
Kelly~\cite{Kel2} using different methods.

\begin{corollary}
If $n>2\ell$, then $n$-vertex graphs having no component with more than
$n-\ell$ vertices are $\ell$-reconstructible, and this threshold on $n$ is 
sharp.  All $n$-vertex graphs having no component with at least $n-\ell$
vertices are $\ell$-reconstructible, with no restriction on $n$.
\end{corollary}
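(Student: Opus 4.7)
The plan is to apply Lemma~\ref{counting} with $\cF$ taken to be the family of connected graphs. This family is absorbing for every graph, because each connected induced subgraph lies in the unique component containing it; hence once $m(F,\cdot)$ is determined for every connected $F$, the multiset of components is determined, and so is the graph up to isomorphism. By Lemma~\ref{counting}, it suffices to know $m(F,\cdot)$ for connected $F$ with $v(F)\ge n-\ell$, both for $G$ and for any graph $H$ having the same $(n-\ell)$-deck.

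For the second statement, the hypothesis gives $m(F,G)=0$ for every connected $F$ with $v(F)\ge n-\ell$. I would then prove the auxiliary claim that if an $n$-vertex graph has a component of order at least $n-\ell$, then at least one of its cards is connected: a component of order exactly $n-\ell$ appears as a card by deleting its complement, and a component of larger order contains a connected induced subgraph on exactly $n-\ell$ vertices (obtained for instance by pruning leaves of a spanning tree), which becomes a connected card once we also delete all vertices outside this component. The hypothesis on $G$ forces every $(n-\ell)$-vertex subset of $V(G)$ to meet at least two components, so no card of $G$ is connected; hence no card of any reconstruction $H$ is connected either, so $H$ likewise has $m(F,H)=0$ for $v(F)\ge n-\ell$, and Lemma~\ref{counting} concludes.

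For the first statement, $G$ may have components of order exactly $n-\ell$, so its deck can contain connected cards and the previous test no longer rules out a reconstruction $H$ with a bigger component. The role of $n>2\ell$ is that any $n$-vertex graph has at most one component of order exceeding $n-\ell$, since two such components would account for more than $n$ vertices. Thus the task reduces to detecting from the deck whether such a unique ``big'' component exists in $H$ and, if so, identifying it. I would attempt this using the fact that the $(n-\ell)$-deck determines $s(F,\cdot)$ for every $F$ with $v(F)\le n-\ell$ by a Kelly-type identity, together with an analysis of the multiset of connected $(n-\ell)$-vertex cards to separate the trace of a single large component from that of several smaller ones. Once this detection is accomplished, one recovers $m(F,\cdot)$ for $v(F)\ge n-\ell$ on both $G$ and $H$ and concludes as before. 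I expect this detection step to be the main technical obstacle.

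Sharpness of the threshold $n>2\ell$ is witnessed at $n=2\ell$ by pairs such as $2K_2$ and $P_3+K_1$ for $\ell=2$, and $2P_3$ and $P_4+K_2$ for $\ell=3$: in each case the first graph has all components of order at most $n-\ell$, the second has a component of larger order, and direct computation shows their $\ell$-decks coincide. Hence the first graph of each pair is not $\ell$-reconstructible, so the strict inequality $n>2\ell$ cannot be relaxed.
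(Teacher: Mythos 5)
Your overall framework is the same as the paper's (apply Lemma~\ref{counting} with $\cF$ the family of connected graphs, so that $m(F,\cdot)$ recovers the multiset of components), and your argument for the second statement is correct and complete. However, for the first statement there is a genuine gap: you explicitly leave the ``detection step'' --- deciding from the deck whether a reconstruction $H$ has a component with more than $n-\ell$ vertices, and if not, identifying the component of order exactly $n-\ell$ --- as an unresolved ``main technical obstacle.'' That step is the entire content of the first statement, and without it you have not established either recognizability of the property or the input data $m(F,\cdot)$ for $\C{F}\ge n-\ell$ that Lemma~\ref{counting} requires. The missing idea is elementary: when $n>2\ell$, a graph has no component with more than $n-\ell$ vertices if and only if its $(n-\ell)$-deck contains at most one connected card. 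Indeed, if no component exceeds $n-\ell$ vertices, then a connected card must be an entire component of order exactly $n-\ell$, and there is at most one such component since $2(n-\ell)>n$; conversely, a component with more than $n-\ell$ vertices yields at least two distinct connected $(n-\ell)$-vertex induced subgraphs (prune a spanning tree from different leaves), hence at least two connected cards. This both makes the property $\ell$-recognizable and hands you $m(F,\cdot)$ for all connected $F$ with at least $n-\ell$ vertices (namely $1$ for the unique connected card, if any, and $0$ otherwise), after which Lemma~\ref{counting} finishes exactly as you intend. No Kelly-type counting beyond this is needed.

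A secondary issue: your sharpness examples cover only $\ell=2$ and $\ell=3$. Sharpness of the threshold requires a counterexample at $n=2\ell$ for every $\ell$; your pairs are the cases $\ell=2,3$ of the general family $P_\ell+P_\ell$ versus $P_{\ell+1}+P_{\ell-1}$, and you should either verify that family for all $\ell$ (the paper invokes the Spinoza--West theorem that graphs whose components are long enough paths and cycles with the same vertex and edge counts share the same $k$-deck) or give some other infinite family.
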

\begin{proof}
A graph with more than $2\ell$ vertices can only have one component with
at least $n-\ell$ vertices, and it has no component with more vertices if and
only if it has at most one connected $(n-\ell)$-card.  Hence the condition is
$\ell$-recognizable, and if there is a component with $n-\ell$ vertices it is
seen as a card.  Since the family of connected graphs is absorbing, by
Lemma~\ref{counting} graphs satisfying the condition are $\ell$-reconstructible.

The result is sharp, since $P_{\ell}+P_{\ell}$ and $P_{\ell+1}+P_{\ell-1}$ have
the same $\ell$-deck.  This follows from the result of Spinoza and
West~\cite{SW} that any two graphs with the same number of vertices and edges
whose components are all cycles with at least $k+1$ vertices or paths with
at least $k-1$ vertices have the same $k$-deck.
\end{proof}

By Lemma~\ref{buttermax}, the family of $k$-vines is absorbing for every graph
with girth at least $2k+2$ (the minimal $k$-vines in a graph all have $2k+1$
vertices).  This sometimes allows us to reconstruct the number of $k$-centers
from the deck.

\begin{corollary}\label{kcent}
Let $\cD$ be the $(n-\ell)$-deck of an $n$-vertex graph.  If every card in $\cD$
is acyclic, and every card in $\cD$ has radius greater than $k$, then all
reconstructions from $\cD$ have the same number of $k$-centers, which can
be computed from $\cD$.
\end{corollary}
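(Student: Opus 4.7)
The plan is to apply Lemma~\ref{counting} with $\cF$ taken to be the family of $k$-vines. For any reconstruction $G$ from $\cD$, this reduces the corollary to two claims: (a) the $k$-vines of $G$ form an absorbing family, and (b) $m(F,G)$ is already known (in fact zero) for every $k$-vine $F$ with $|V(F)|\ge n-\ell$.

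For (a), since every card of $\cD$ is acyclic, every cycle of $G$ must meet every $\ell$-element vertex subset, so $G$ has girth at least $n-\ell+1$ (or is itself acyclic). The hypothesis that every card has radius greater than $k$ forces each card to contain a tree component of diameter at least $2k+1$, hence of order at least $2k+2$; thus $n-\ell\ge 2k+2$. Either way, $G$ has girth at least $2k+2$, and Lemma~\ref{buttermax} delivers the absorbing family.

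For (b), suppose for contradiction that $G$ has a maximal $k$-vine $M$ with $|V(M)|\ge n-\ell$. By definition $M$ is a tree of diameter $2k$, and its center $v$ has degree at least $2$ (being the midpoint of a diametral path). I iteratively delete from $M$ a leaf other than $v$---always available because a tree on at least two vertices has at least two leaves---until the remaining subtree $M'$ has exactly $n-\ell$ vertices. As a connected subtree of the tree $M$, $M'$ inherits pairwise distances, so $v$ has eccentricity at most $k$ in $M'$, and $M'$ has radius at most $k$. But $M$ is an induced subgraph of $G$ and $V(M')\subseteq V(M)$, so $M'=G[V(M')]$ is the $(n-\ell)$-vertex card of $G$ obtained by deleting the $\ell$ vertices of $V(G)\setminus V(M')$, contradicting the hypothesis.

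Given (a) and (b), Lemma~\ref{counting} computes $m(F,G)$ from $\cD$ for every $k$-vine $F$, returning the same value for every reconstruction. Because the center of a $k$-vine is uniquely determined as the midpoint of any diametral path, distinct $k$-centers correspond to distinct maximal $k$-vines; the number of $k$-centers of $G$ therefore equals $\sum_F m(F,G)$ over isomorphism classes of $k$-vines, and this sum is common to all reconstructions and is computable from $\cD$. I expect (b) to be the main obstacle, since the leaf-pruning argument depends on $M$ itself being a tree so that pairwise distances are preserved in the pruned subgraph and the resulting card genuinely has radius at most $k$.
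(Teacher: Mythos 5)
Your proof is correct and follows essentially the same route as the paper: girth at least $n-\ell+1$ from the acyclic cards, $n-\ell\ge 2k+2$ from the radius hypothesis, Lemma~\ref{buttermax} to make the $k$-vines an absorbing family, Lemma~\ref{counting} to count maximal $k$-vines, and the bijection between $k$-centers and maximal $k$-vines. The only difference is cosmetic: you spell out the leaf-pruning argument showing no (maximal) $k$-vine has $n-\ell$ or more vertices, which the paper asserts in one sentence.
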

\begin{proof}
A connected acyclic card with radius greater than $k$ has at least $2k+2$
vertices.  Hence $n-\ell\ge 2k+2$.  Since all cards are acyclic, every
reconstruction has girth at least $2k+3$.

Since every $(n-\ell)$-card has radius greater than $k$, every $k$-vine
has fewer than $n-\ell$ vertices.  Hence by examining the deck we see all
the $k$-vines and determine the maximum number of vertices in a
$k$-vine; call it $m$.  We have $m<n-\ell$.

Thus any reconstruction has no $k$-vines with $i$ vertices whenever
$i\ge n-\ell$.  By Lemma~\ref{buttermax}, Lemma~\ref{counting} implies that
the $(n-\ell)$-deck determines the numbers of maximal $k$-vines with
$i$ vertices for all $i$.  Finally, since the $k$-centers correspond
bijectively to maximal $k$-vines, the number of $k$-centers in any
reconstruction is determined by the deck.
\end{proof}

For the case $n=2\ell+1$ of our result, we will need the analogue of 
Corollary~\ref{kcent} for vertex degrees.  Manvel~\cite{Man} proved this
without the restriction to triangle-free graphs, but without the triangles
we obtain the result as a simple application of Lemma~\ref{counting}.

\begin{corollary}[\rm \cite{Man}]\label{deg1}
If the number of vertices of degree $i$ in an $n$-vertex triangle-free graph
$G$ is known whenever $i\ge n-\ell$, then the degree list of $G$ is
$\ell$-reconstructible.
\end{corollary}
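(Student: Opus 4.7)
The plan is to apply Lemma~\ref{counting} with the absorbing family $\cF=\{K_{1,r}\st r\ge 2\}$ of induced stars with at least two leaves (equivalently, the $1$-vines).  Since $G$ is triangle-free, its girth is at least $4=2\cdot 1+2$, so Lemma~\ref{buttermax} guarantees that $\cF$ is absorbing for $G$; the maximal $1$-vine at a vertex $v$ of degree $d(v)\ge 2$ is the induced star on $v$ together with its neighborhood, so $m(K_{1,r},G)=n_r$, where $n_r$ denotes the number of vertices of degree $r$ in $G$.  The goal is to recover every $n_r$.

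Lemma~\ref{counting} requires knowledge of $m(F,G)$ for every $F\in\cF$ with $|F|\ge n-\ell$, which in our setting means knowing $n_r$ for every $r\ge n-\ell-1$.  The hypothesis directly supplies $n_r$ for $r\ge n-\ell$, so the only missing case is $r=n-\ell-1$.  To determine $n_{n-\ell-1}$, I would first extract $s(K_{1,n-\ell-1},G)$ from the $(n-\ell-1)$-deck (which the $(n-\ell)$-deck determines via the standard Kelly identity), and then use that in a triangle-free graph every induced $K_{1,n-\ell-1}$ arises by choosing a vertex $v$ together with $n-\ell-1$ of its neighbors.  This yields
\[
s(K_{1,n-\ell-1},G)=\sum_{r\ge n-\ell-1}\binom{r}{n-\ell-1}\,n_r,
\]
and since $n_r$ is already known for $r\ge n-\ell$, this identity pins down $n_{n-\ell-1}$.

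With $n_r$ known for every $r\ge n-\ell-1$, Lemma~\ref{counting} now outputs $m(K_{1,r},G)=n_r$ for every $r\ge 2$.  To finish, I would recover $n_0$ and $n_1$ from the two linear relations $\sum_r n_r=n$ and $\sum_r r\,n_r=2e(G)$; the edge count $e(G)$ is read directly from the $2$-deck (itself determined by $\cD$), so the second relation gives $n_1=2e(G)-\sum_{r\ge 2}r\,n_r$ and the first then gives $n_0$.

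The only delicate point is the boundary computation of $n_{n-\ell-1}$, where the triangle-free hypothesis is essential in order to equate induced stars with arbitrary selections from neighborhoods; without it, the displayed binomial identity would overcount.  Once that ingredient is in place, the rest is a routine application of the absorbing-family machinery of Lemma~\ref{counting} together with the basic vertex and edge counts supplied by the deck.
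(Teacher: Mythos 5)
Your proof is correct and follows the paper's argument exactly: the $1$-vines (induced stars with at least two edges) form an absorbing family by Lemma~\ref{buttermax}, Lemma~\ref{counting} then yields the number of vertices of each degree at least $2$, and the $1$-deck and $2$-deck supply the counts for degrees $0$ and $1$. Your boundary computation of $n_{n-\ell-1}$ is a worthwhile extra step that the paper's one-line proof elides (Lemma~\ref{counting} literally requires $m(K_{1,n-\ell-1},G)$ as input, since that star has exactly $n-\ell$ vertices while the hypothesis only supplies degree counts for $i\ge n-\ell$); just note that $s(K_{1,n-\ell-1},G)$ is read off from the $(n-\ell)$-deck itself rather than the $(n-\ell-1)$-deck, because $K_{1,n-\ell-1}$ has $n-\ell$ vertices.
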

\begin{proof}
The $1$-vines in $G$ are the stars with at least two edges.  With girth
at least $4$, Lemma~\ref{buttermax} allows use of Lemma~\ref{counting} to count
the vertices with each degree at least $2$.  Since we also know the $2$-deck
and $1$-deck, we know the numbers of edges and vertices, which also gives the
numbers of vertices of degrees $1$ and $0$.
\end{proof}

We will also need concepts for edges that are analogous to $k$-vines and
$k$-centers.

\begin{definition}
Given an edge $e$ in a graph $G$, the {\it $k$-eball} at edge $e$ is the
subgraph induced by all vertices within distance $k$ of either endpoint of
$e$ in $G$.  A {\it $k$-evine} is a tree with diameter $2k+1$.
A {\it $k$-central edge} in a graph $G$ is an edge $e$ whose $k$-eball
contains a $k$-evine whose center is the vertex set of $e$.
\end{definition}

Note that a $k$-evine has radius $k+1$.  Also, when the minimum radius
among cards is $k+1$, every card has diameter at least $2k+1$.

\begin{lemma}\label{ebuttermax}
In a graph $G$ with girth at least $2k+3$, every $k$-evine is contained
in a unique maximal $k$-evine.
\end{lemma}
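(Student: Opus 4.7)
The plan is to mirror the proof of Lemma~\ref{buttermax}, with the vertex center of a $k$-vine replaced by the edge center of a $k$-evine and the girth threshold $2k+2$ replaced by $2k+3$. First I would observe that every $k$-evine in $G$ is an induced subgraph, since any chord of a tree of diameter $2k+1$ would complete a cycle of length at most $2k+2$, violating the girth hypothesis.

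Given a $k$-evine $B$ and any $k$-evine $B'$ containing $B$, both contain a path $P$ of length $2k+1$ whose central edge $e=uv$ is uniquely determined. Since $B'$ is itself a tree of diameter $2k+1$ containing $P$, its center must also be $e$, so every vertex of $B'$ lies within distance $k$ of $\{u,v\}$; hence $V(B')\subseteq U_k(e)$. Consequently any $k$-evine containing $B$ is an induced subtree of the subgraph induced on $U_k(e)$, and uniqueness of the maximal $k$-evine containing $B$ follows once we know that subgraph itself is a $k$-evine.

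The remaining and most delicate step is to verify that $U_k(e)$ induces a tree of diameter $2k+1$. Connectedness is immediate: for any $x\in U_k(e)$, a shortest path to the nearer endpoint of $e$ stays inside $U_k(e)$, and the endpoints of $e$ are joined by $e$ itself. The diameter inequality $d(x,y)\le d(x,e)+1+d(y,e)\le 2k+1$ for $x,y\in U_k(e)$, combined with $B\subseteq U_k(e)$, pins the diameter at $2k+1$. Acyclicity is where the girth hypothesis is used: supposing a cycle $C$ lies inside $U_k(e)$, I would pick an edge $xy$ of $C$ together with $u',v'\in\{u,v\}$ satisfying $d(x,u')\le k$ and $d(y,v')\le k$, and splice a shortest $u'$-to-$x$ path, the edge $xy$, a shortest $y$-to-$v'$ path, and (if $u'\ne v'$) the edge $e$ into a closed walk of length at most $2k+2$; the support of this walk will then contain a cycle of length at most $2k+2$, contradicting girth at least $2k+3$. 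The $+2$ in this bound (versus the $+1$ in the vertex-centered walk of Lemma~\ref{buttermax}) is precisely what forces the girth threshold to jump by one.

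The main obstacle is bookkeeping in the last step, namely ensuring that the constructed closed walk really contains a cycle rather than degenerating to a tree walk. The remedy is to exploit the fact that in a graph of girth at least $2k+3$, shortest paths of length at most $k$ are unique, which allows one to choose the edge $xy$ along $C$ so that $xy$ does not coincide with the final edge of either of the chosen shortest paths, thereby guaranteeing that $xy$ appears with odd multiplicity in the walk.
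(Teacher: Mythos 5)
Your first two paragraphs are precisely the paper's proof: the path $P$ with $2k+2$ vertices inside $B$ determines the central edge $e$, no $k$-evine with a different central edge can contain $P$, and hence every $k$-evine containing $B$ lies in the $k$-eball of $e$, whose induced subgraph is the unique maximal one. The paper stops there, taking for granted (as it also does in Lemma~\ref{buttermax}) that this $k$-eball induces a tree of diameter $2k+1$; your last two paragraphs supply that verification, and they are sound except for the remedy proposed at the very end. Uniqueness of short shortest paths does not by itself produce an edge $xy$ of $C$ avoiding the final edges of the two chosen shortest paths, and you must also rule out $xy=e$, since if $xy=e$ the walk traverses $e$ twice and degenerates. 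The clean repair uses the level function $f(x)=\min\{d(x,u),d(x,v)\}$ on the cycle $C$: if some edge $xy\ne e$ of $C$ has $f(x)=f(y)$, then the final edges of the two shortest paths descend to level $f(x)-1$ and so cannot equal $xy$, which therefore has odd multiplicity in a closed walk of length at most $2k+2$; otherwise every edge of $C$ other than $e$ changes $f$ by exactly $1$, and a vertex of $C$ maximizing $f$ lies on neither shortest path issuing from its two cycle-neighbours, so its two cycle-edges each appear exactly once in a closed walk of length at most $2k+1$. In either case the edges of odd multiplicity form a nonempty even subgraph and hence contain a cycle of length at most $2k+2$, contradicting the girth hypothesis; with that adjustment your argument is complete and follows the paper's route.
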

\begin{proof}
A $k$-evine $B$ in $G$ contains a path $P$ with $2k+2$ vertices.  Since $G$ has
girth at least $2k+3$, $B$ is an induced subgraph of $G$.  The path $P$
determines a unique $k$-central edge $e$ in $B$, and no $k$-evine with
a different central edge can contain $P$.  Hence the unique maximal
$k$-evine containing $B$ is the $k$-eball for $e$.
\end{proof}

\begin{lemma}\label{kecent}
Let $n,k,\ell$ be positive integers with $2k+2\le n-\ell$.
If all cards in an $(n-\ell)$-deck $\cD$ are acyclic, with radius greater than
$k$, and no card has diameter $2k+1$, then the deck determines the
number of $k$-central edges in any $n$-vertex reconstruction, and this number
can be computed from the deck.
\end{lemma}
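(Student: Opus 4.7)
The plan is to mirror the strategy of Corollary~\ref{kcent}, with $k$-evines playing the role of $k$-vines and Lemma~\ref{ebuttermax} replacing Lemma~\ref{buttermax}. Since every card in $\cD$ is acyclic, every $n$-vertex reconstruction $G$ has girth strictly greater than $n-\ell$, hence at least $2k+3$. Lemma~\ref{ebuttermax} then makes the family of $k$-evines absorbing for $G$, and $k$-central edges in $G$ correspond bijectively to maximal $k$-evines, each being the $k$-eball at its central edge. Counting $k$-central edges therefore reduces, via Lemma~\ref{counting}, to controlling the $k$-evines of size at least $n-\ell$.

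The crux is to show that every $k$-evine in $G$ has fewer than $n-\ell$ vertices, so that the hypothesis of Lemma~\ref{counting} is vacuously satisfied. Suppose, for contradiction, that $B$ is a $k$-evine in $G$ with $|V(B)| \ge n-\ell$. Fix a diametral path $P$ in $B$, which has exactly $2k+2$ vertices. Since $2k+2 \le n-\ell \le |V(B)|$, iteratively pruning leaves of $B$ that lie off $P$ produces a connected subtree $T'$ of $B$ with $V(P) \subseteq V(T')$ and $|V(T')| = n-\ell$; such a leaf always exists while the current subtree strictly contains $P$. Setting $S = V(T')$, we have $G[S] = B[S] = T'$ because $B$ is an induced subgraph of $G$. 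Then $T'$ is a tree of diameter exactly $2k+1$: at least $2k+1$ because $P \subseteq T'$, and at most $2k+1$ because $T'$ is a subtree of $B$. But $G[S]$ is a card in $\cD$ of diameter $2k+1$, contradicting the hypothesis.

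Having ruled out $k$-evines of size at least $n-\ell$, we know $m(F, G) = 0$ for every $k$-evine $F$ with at least $n-\ell$ vertices, so the hypothesis of Lemma~\ref{counting} holds. That lemma then determines $m(F, G)$ from $\cD$ for every $k$-evine $F$. Summing over isomorphism classes of $k$-evines gives the total number of maximal $k$-evines, and hence the number of $k$-central edges in $G$. The main obstacle is the pruning step in the middle paragraph: one must convert the hypothesis ``no card has diameter $2k+1$'' into a size bound on $k$-evines by extracting a card-sized induced subtree that still witnesses the full diameter, and this is where the numerical assumption $2k+2 \le n-\ell$ is essential (to ensure the trimmed subtree can still contain a diametral path of $B$).
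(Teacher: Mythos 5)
Your proposal is correct and follows essentially the same route as the paper: establish girth at least $2k+3$ from acyclicity of the cards, invoke Lemma~\ref{ebuttermax} to make $k$-evines an absorbing family, and rule out $k$-evines with at least $n-\ell$ vertices by pruning leaves off a diametral path to produce a forbidden card of diameter $2k+1$, so that Lemma~\ref{counting} applies and maximal $k$-evines (equivalently, $k$-central edges) can be counted. Your middle paragraph just spells out in more detail the leaf-deletion step that the paper states in one sentence.
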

\begin{proof}
The acyclic cards with diameter $2k+1$ are the $k$-evines with $n-\ell$
vertices.  With such cards forbidden, no $k$-evine has more than $n-\ell$
vertices, because with $2k+2\le n-\ell$ we could delete leaves outside a
longest path to obtain a $k$-evine with $n-\ell$ vertices.

Since $n-\ell\ge 2k+2$, any reconstruction from $\cD$ has girth at 
least $2k+3$.  By Lemma~\ref{ebuttermax}, the family of $k$-evines 
is an absorbing family for any reconstruction.  Since no $k$-evines have
at least $n-\ell$ vertices, Lemma~\ref{counting} applies, and the 
deck determines the numbers of maximal $k$-evines with each number of vertices.
In particular, it determines the total number of maximal $k$-evines, and
this is the same as the number of $k$-central edges.
\end{proof}

\section{The Proof for $n\ge2\ell+2$}

We begin by developing a tool for bounding the number of $k$-centers in a 
forest in terms of its deck.  This tool does not depend on the relationship
between $n$ and $\ell$.

\begin{definition}\label{markdef}
Let $\cD$ be the $(n-\ell)$-deck $\cD$ of an $n$-vertex graph having a 
component with at least $n-\ell$ vertices.  Let $\hk$ be the minimum radius
among cards in $\cD$, and let $k=\hat k-1$.  {\it This henceforth fixes $k$.}

A {\it short card} is a card with radius $\hk$.  When $C$ is a short card with
center $z$ in the $(n-\ell)$-deck of an $n$-vertex forest $F$, the
{\it marking argument} describes a relationship between $k$-central vertices of
$F$ and vertices of $F$ outside $C$.  Each $k$-center $x$ other than $z$ that
is in the component of $F$ containing $C$ marks a vertex $x'$ at distance $k$
from $x$ along a path that extends the $z,x$-path in $F$.  If $x$ is not
adjacent to $z$, then $x'$ is outside $C$.  Furthermore, since $F$ is a forest,
every vertex outside $C$ is marked by at most one $k$-center.

For a short card $C$ with a center $z$, let $d_C$ denote the maximum number of
edge-disjoint paths of length $\hk$ in $C$ with common endpoint $z$.  Note
that short cards have diameter $2\hk$ (with unique center) or $2\hk-1$.
In the latter case, $d_C=1$ when viewed from either center.
\end{definition}

\begin{lemma}\label{marking}
If $C$ is a short card with center $z$ in the deck of a forest $F$, then the
number of $k$-centers in $F$ is at most $1+d_C+\ell$.  If equality holds, then
in the marking argument each vertex $x'$ of $F$ outside $C$ is marked by a
$k$-center $x$ not adjacent to $z$, and $F$ is a tree.
\end{lemma}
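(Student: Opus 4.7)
The plan is to bound the number $|K|$ of $k$-centers in $F$ by partitioning $K$ according to position relative to $C$ and to $z$. Write $T$ for the component of $F$ containing $C$, set $S=V(F)\setminus V(C)$ so $|S|=\ell$, and split $S=S_1\cup S_2$ where $S_1=S\cap V(T)$. Decompose $K=(\{z\}\cap K)\cup K_{1a}\cup K_{1b}\cup K_2$, in which $K_{1a}$ (respectively $K_{1b}$) consists of $k$-centers in $V(T)\setminus\{z\}$ that are (respectively are not) adjacent to $z$, and $K_2$ consists of $k$-centers in other components of $F$.

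Each member of $K_2$ is a vertex of $F$ outside $T$, hence lies in $S_2$, so $|K_2|\le|S_2|$. For $x\in K_{1b}$, the marked vertex $x'$ is at distance at least $\hk+1$ from $z$ in $F$ and hence belongs to $S_1$; the assignment $x\mapsto x'$ is injective because the unique $z,x'$-path in the tree $T$ recovers $x$, so $|K_{1b}|\le|S_1|$.

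The heart of the argument is $K_{1a}$. Each $x\in K_{1a}$, being a $k$-center, gives a length-$k$ path in $F$ from $x$ not using the edge $zx$; prepending $zx$ produces a length-$\hk$ path $P_x$ starting at $z$, and distinct $x$'s produce internally disjoint $P_x$'s. Split $K_{1a}=K_{1a}'\cup K_{1a}''$ according to whether $P_x\subseteq V(C)$. For $x\in K_{1a}'$ the paths $P_x$ are pairwise edge-disjoint length-$\hk$ paths in $C$ from $z$, giving $|K_{1a}'|\le d_C$. For $x\in K_{1a}''$, let $y_x$ denote the first vertex of $S$ on $P_x$ after $z$; this defines an injection $K_{1a}''\hookrightarrow S_1$ whose image lies at distance at most $\hk$ from $z$. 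The image sets $\{y_x\}$ and $\{x':x\in K_{1b}\}$ are disjoint subsets of $S_1$ by the distance separation ($\le\hk$ versus $\ge\hk+1$), so $|K_{1a}''|+|K_{1b}|\le|S_1|$.

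Summing gives $|K|\le 1+d_C+|S_1|+|S_2|=1+d_C+\ell$. For equality each step must be tight. Tightness of $|K_2|=|S_2|$ would force every vertex of $F$ outside $T$ to be a $k$-center, which is impossible whenever $S_2\ne\varnothing$ because a forest component has a leaf and leaves are not $k$-centers (for $k\ge1$); hence $F=T$ is a tree, as claimed. The marking conclusion is more delicate: equality forces $|K_{1a}''|+|K_{1b}|=|S_1|$, so every vertex of $S$ is either the first-$S$ charge of some $K_{1a}''$-center or the mark of some $K_{1b}$-center. The main obstacle will be strengthening this to the lemma's statement that each $x'$ outside $C$ is actually marked by a $K_{1b}$-center (rather than merely charged by a $K_{1a}''$-center); I expect to use the structural rigidity of short cards (diameter $2\hk$ or $2\hk-1$, with $z$ on every longest path) together with the tightness $|K_{1a}'|=d_C$ to push through this final step.
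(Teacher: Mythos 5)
Your inequality and your proof that equality forces $F$ to be a tree are both correct, and they follow essentially the same marking/charging idea as the paper's proof, just with the count made explicit via the partition $\{z\}\cup K_{1a}'\cup K_{1a}''\cup K_{1b}\cup K_2$. The genuine gap is the one you name yourself: your last sentence announces that you ``expect to'' establish the final clause of the lemma (that each vertex of $F$ outside $C$ is marked by a $k$-center not adjacent to $z$) but does not do so. As submitted, the equality case is proved only for the tree conclusion, and the marking clause is exactly what Lemmas~\ref{ext} and~\ref{no2k+1} later invoke, so it cannot be deferred.

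Two remarks to help you close it. First, one more step is available inside your framework: in the equality case the charged vertex $y_x$ for $x\in K_{1a}''$ must coincide with the endpoint of $P_x$ (that endpoint lies in $S_1$ at distance $\hk$ from $z$ and on the internally disjoint path $P_x$, so it can be covered by neither of your injections unless it equals $y_x$); hence every vertex of $S$ really is marked, by a $k$-center outside $\{z\}\cup Y$, where $Y$ is a set of $d_C$ neighbors of $z$ heading edge-disjoint length-$\hk$ paths inside $C$. That is the form the paper's own count delivers and the form the later lemmas actually use. Second, be warned that the literal clause ``not adjacent to $z$'' is stronger than anything the counting can give: a neighbor $x$ of $z$ lying in $C$ that is a $k$-center but whose length-$k$ path away from $z$ exits $C$ only at its last vertex belongs to your $K_{1a}''$, is adjacent to $z$, and marks a vertex outside $C$, and equality can hold in such configurations (for instance $k=1$, $\ell=3$, $n=9$: take $z$ adjacent to $a,b,c,x$ with pendant paths $a\,a_1$, $b\,b_1$, and $x\,x_1\,x_2$, and the short card $C$ induced by $\{z,a,b,c,x,a_1\}$, where $b_1$ is marked only by $b$, a neighbor of $z$). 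So do not spend effort trying to force $K_{1a}''=\varnothing$; prove the $\{z\}\cup Y$ version and carry that forward.
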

\begin{proof}
Let $F'$ be the component of $F$ containing $C$, and let $\ell'$ be the number
of vertices of $F'$ outside $C$.  The neighbors of $z$ in $C$ along paths of
length $\hk$ in $C$ are $k$-centers, as is $z$.  By the marking argument, $F'$
contains at most $\ell'$ additional centers.  In any component of $F$ other
than $F'$, the number of $k$-centers is strictly less than the number of
vertices, since vertices with degree at most $1$ cannot be $k$-centers.  The
total number of vertices of $F$ outside $C$ is $\ell$.  Summing over all the
components of $F$, the number of $k$-centers is at most $1+d_C+\ell$, with
equality only when $F$ is a tree.
\end{proof}

Our task is to study when a deck determines whether all reconstructions
have cycles.

\begin{definition}
We say that a deck $\cD$ is {\it ambiguous} if it is the $(n-\ell)$-deck of
both an $n$-vertex acyclic graph and an $n$-vertex non-acyclic graph.  Given
an ambiguous deck, we typically let $F$ and $H$ be $n$-vertex acyclic and
non-acyclic graphs having $(n-\ell)$-deck $\cD$, respectively.  All cards of an
amibiguous deck are acyclic, being induced subgraphs of a forest.  Hence when
$\cD$ is ambiguous the graph $H$ has girth at least $n-\ell+1$, and thus $\cD$
has connected cards (in fact, paths).
\end{definition}

Since all cards in $\cD$ are acyclic, and all cards have radius greater than
$k$, it follows from Corollary~\ref{kcent} that the number of $k$-centers is
the same for all reconstructions from $\cD$.  We will study the number of
$k$-central edges to eliminate the possibility of ambiguous decks
when $n\ge2\ell+2$.

First we exclude the case $\hk=1$, after which $k$ will be positive.

\begin{lemma}\label{hk>1}
If $\cD$ is ambiguous and $n\ge2\ell+2$, then $\hk>1$.
\end{lemma}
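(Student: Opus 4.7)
The plan is to assume $\hk=1$ and derive a contradiction from an edge-count inequality comparing $F$ and $H$, where $F$ is a forest and $H$ is a non-acyclic graph with the same deck $\cD$. If $\hk=1$, then some card in $\cD$ has radius $1$, so it has a universal vertex. Because every card is acyclic (being a card of the forest $F$), such a card is a tree with a universal vertex, hence the star $K_{1,n-\ell-1}$. Since this card is also an induced subgraph of $H$, the graph $H$ contains an induced $K_{1,n-\ell-1}$; call its center $z$. Every leaf of this star is a neighbor of $z$ in $H$, so $\deg_H(z)\ge n-\ell-1$.

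Next I would use the girth of $H$. Because every $\ell$-subset of $V(H)$ must induce an acyclic graph (otherwise some card contains a cycle), every cycle of $H$ has length at least $n-\ell+1$. Let $C$ be a shortest cycle of $H$, with $c\ge n-\ell+1$ edges. At most two edges of $C$ are incident to $z$ (since $C$ is simple), so $C$ contributes at least $c-2$ edges not incident to $z$, and these are disjoint from the $\deg_H(z)$ edges at $z$. Therefore $H$ has at least $(n-\ell-1)+(c-2)\ge 2n-2\ell-2$ edges.

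The $(n-\ell)$-deck determines the number of edges via the $2$-deck, so $F$ and $H$ have the same edge count, and since $F$ is a forest on $n$ vertices that count is at most $n-1$. Combining yields $2n-2\ell-2\le n-1$, i.e., $n\le 2\ell+1$, contradicting $n\ge 2\ell+2$.

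The main (and essentially only) step requiring care is the structural observation that a radius-$1$ acyclic card must be a star, forcing a vertex of degree at least $n-\ell-1$ in $H$. Once that is in place the rest is a routine inequality: the long cycle mandated by the girth bound, together with the high-degree vertex coming from the star card, simply overwhelms the forest edge bound inherited from $F$.
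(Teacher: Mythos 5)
Your proof is correct, and it reaches the decisive inequality $2n-2\ell-2\le n-1$ by a genuinely different count than the paper. The paper counts \emph{vertices}: it first argues that the star card and a shortest cycle must lie in a common component, that $H$ must be disconnected (since it has a cycle but at most $n-1$ edges), and that with girth at least $4$ the star and cycle share at most three vertices, so the common component has at least $(n-\ell+1)+(n-\ell)-3$ vertices yet at most $n-1$. You instead count \emph{edges}: the star card forces $\deg_H(z)\ge n-\ell-1$, a shortest cycle contributes at least $(n-\ell+1)-2$ further edges not incident to $z$, and the total must not exceed the forest bound $n-1$ inherited from $F$ via the $2$-deck. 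Your route is shorter and needs fewer structural facts --- you never have to establish that $H$ is disconnected, that the star and cycle cohabit a component, or how many vertices they can share. Both arguments rest on the same three ingredients (the radius-$1$ card is a star, the girth of $H$ is at least $n-\ell+1$, and the edge count is at most $n-1$), so the gain is in economy rather than in generality. One small slip to fix: you wrote that every \emph{$\ell$-subset} of $V(H)$ induces an acyclic graph; it should be every $(n-\ell)$-subset (the cards), which is what actually yields the girth bound $n-\ell+1$ that you then use.
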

\begin{proof}
A card with radius $1$ is a star with $n-\ell$ vertices.
A non-acyclic reconstruction $H$ has a cycle with at least $n-\ell+1$ vertices.
Since $2n-2\ell+1\ge n+2$, a star with $n-\ell$ vertices must lie in the same
component of $H$ with any cycle.  Since $\cD$ also gives us the $2$-deck
of a forest, any reconstruction has at most $n-1$ edges.  Since $H$ contains
a cycle, $H$ must therefore be disconnected.

Since $H$ has girth at least $n-\ell+1$, which is at least $4$,
a star shares at most three vertices with a cycle.  Hence the number of 
vertices in their common component of $H$ is at least $(n-\ell+1)+(n-\ell)-3$.
Since this is at most $n-1$, we conclude $n\le 2\ell+1$.
\end{proof}

\begin{lemma}\label{k-eball}
If $\cD$ is ambiguous and $n\ge2\ell+2$, then $\cD$ has no card
with diameter $2k+1$.
\end{lemma}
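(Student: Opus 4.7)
The plan is to derive a contradiction by comparing the number of $k$-centers in an acyclic reconstruction $F$ and a non-acyclic reconstruction $H$ from the same ambiguous deck $\cD$. By Corollary~\ref{kcent} these two counts agree; I will show that the presence of a card of diameter $2k+1$ forces $F$ to have too few $k$-centers and $H$ to have too many.

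Suppose $\cD$ contains a card $C$ with diameter $2k+1$. Since all cards are acyclic, $C$ is a tree of diameter $2k+1$, hence a $k$-evine on $n-\ell$ vertices with radius $\hk=k+1$; in particular $C$ is a short card. The two bicenters of $C$ are joined by the unique central edge of the evine, and viewing $C$ from either bicenter $z$ we have $d_C=1$, because every path of length $\hk$ starting at $z$ must traverse the central edge, so no two such paths can be edge-disjoint. Applying Lemma~\ref{marking} to $F$ with $C$ then yields at most $1 + d_C + \ell = \ell + 2$ $k$-centers in $F$.

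For the lower bound on $k$-centers of $H$, note that $H$ has girth at least $n-\ell+1$ (otherwise some card would contain a cycle), so $H$ contains a cycle $Z$ with $|Z|\ge n-\ell+1$. Since $C$ contains a path of length $2k+1$ and has $n-\ell$ vertices, we get $n-\ell\ge 2k+2$ and hence $|Z|\ge 2k+3$. In any cycle of length at least $2k+1$, each vertex $v$ is a $k$-center: the $k$ cycle-neighbors on each side of $v$ lie in $U_k(v)$ and, together with $v$, induce a path of length $2k$ centered at $v$, which is a $k$-vine at $v$. Therefore $H$ has at least $n-\ell+1$ $k$-centers. With $n\ge 2\ell+2$ this is at least $\ell+3$, contradicting the bound $\ell+2$ from $F$. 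The only delicate point is the calculation $d_C=1$ for a diameter-$(2\hk-1)$ card; once that is in hand the argument is a direct application of Lemma~\ref{marking} against the long cycle that any non-acyclic reconstruction must contain.
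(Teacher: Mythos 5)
Your proof is correct and follows essentially the same route as the paper: establish $d_C=1$ for a diameter-$(2\hk-1)$ card, apply Lemma~\ref{marking} to bound the $k$-centers of $F$ by $\ell+2$, and compare via Corollary~\ref{kcent} with the at least $n-\ell+1\ge\ell+3$ $k$-centers forced on the long cycle of $H$. The extra justifications you supply (why $d_C=1$, why cycle vertices are $k$-centers) are accurate and match facts the paper records elsewhere.
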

\begin{proof}
Let $C$ be a card with diameter $2k+1$.  As remarked earlier, $d_C=1$.
By Lemma~\ref{marking}, at most $2+\ell$ vertices of $F$ are $k$-centers.
We have noted that all reconstructions from $\cD$ have the same number of
$k$-centers.  All vertices on a cycle in a reconstruction $H$ are $k$-centers in
$H$, and the cycle has length at least $n-\ell+1$.  Hence $2+\ell\ge n-\ell+1$,
which yields $n\le 2\ell+1$.
\end{proof}

\begin{theorem}\label{2l+2}
For $n\ge2\ell+2$, the family of $n$-vertex acyclic graphs is
$\ell$-recognizable.
\end{theorem}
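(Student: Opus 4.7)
The plan is to argue by contradiction, assuming that the $(n-\ell)$-deck $\cD$ is ambiguous with an acyclic reconstruction $F$ and a non-acyclic reconstruction $H$, where $n\ge 2\ell+2$. Lemma~\ref{hk>1} gives $k\ge 1$, and Lemma~\ref{k-eball} forbids cards of diameter $2k+1$. A short card has radius $\hk=k+1$, so its diameter is either $2k+1$ (excluded) or $2k+2$; thus every short card has diameter $2k+2$, which gives $n-\ell\ge 2k+3$ and also $d_C\ge 2$ for every short card $C$ (the two halves of a diameter are edge-disjoint $\hk$-paths from the center $z$). Consequently Corollary~\ref{kcent} and Lemma~\ref{kecent} both apply: writing $K_G$ and $E_G$ for the numbers of $k$-centers and $k$-central edges in a graph $G$, we have $K_F=K_H$ and $E_F=E_H$. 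Since $d_C\ge 2$, the center $z$ supports a $k$-vine and so is a $k$-center in $F$.

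Two further observations set up the contradiction. First, the cycle of length $g\ge n-\ell+1\ge 2k+3$ in $H$ makes every cycle vertex a $k$-center (the two halves of the cycle form a $k$-vine) and every cycle edge $k$-central, so $K_H\ge g$ and $E_H\ge g$. Second, I would prove the structural identity that in any tree $T$ the $k$-centers form a subtree whose edges are exactly the $k$-central edges of $T$. The forward direction: if $u,v$ are $k$-centers and $w$ lies on the $uv$-path, the $k$-vine at $u$ yields a path of length $k$ from $w$ away from $v$, and the $k$-vine at $v$ yields one away from $u$. The converse: both endpoints of a $k$-central edge inherit a $k$-vine from the length-$(2k+1)$ path through the edge. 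Hence $E_T=K_T-1$ in every component $T$ of $F$ containing a $k$-center, and summing yields $K_F-E_F=t_F$, where $t_F\ge 1$ is the number of such components. Combining with $K_F=K_H$ and $E_F=E_H$ gives $K_H=E_H+t_F\ge g+1\ge n-\ell+2$.

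Now Lemma~\ref{marking} forces $1+d_C+\ell\ge K_F\ge n-\ell+2$, so $d_C\ge n-2\ell+1$. Because $C$ is acyclic, the $d_C$ edge-disjoint length-$\hk$ paths from $z$ in $C$ are internally vertex-disjoint, giving $|C|\ge 1+d_C(k+1)$. Substituting $|C|=n-\ell$ and rearranging, $(n-2\ell+1)k\le \ell-2$. For $k\ge 1$ this forces $n\le 3\ell-3$, which contradicts $n\ge 2\ell+2$ whenever $\ell\le 4$ and settles those cases immediately.

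The main obstacle is the residual regime $\ell\ge 5$ with $2\ell+2\le n\le 3\ell-3$ and correspondingly small $k$. To eliminate it I would exploit additional structure: the equality conditions of Lemma~\ref{marking} (if $K_F$ approaches $1+d_C+\ell$ then $F$ must be a tree and each of the $\ell$ vertices outside $C$ must be marked by a $k$-center not adjacent to $z$, so the location of $F\setminus C$ relative to the diameter of $C$ is essentially forced); the upper bound $t_F\le (\ell-1)/(2k+1)$, coming from the fact that each tree component of $H$ containing a $k$-center uses at least $2k+1$ of the at most $\ell-1$ vertices lying outside $H$'s cycle component; and the sharpened inequality $d_C\ge n-2\ell+t_F$ obtained by replaying the $K_H\ge g+t_F$ argument. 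Playing these bounds against $|C|\ge 1+d_C(k+1)$ should rule out the remaining $(n,\ell,k)$ triples.
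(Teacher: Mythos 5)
Your first two paragraphs are sound (the convexity of the set of $k$-centers in a tree and the identity $K_F-E_F=t_F$ are correct and are implicitly what the paper uses), but the proof is not complete: the chain $1+d_C+\ell\ge K_F=K_H\ge n-\ell+2$ followed by $|C|\ge 1+d_C(k+1)$ only yields $n\le 3\ell-3$, so you settle the theorem only when $n\ge 3\ell-2$ (equivalently, fully only for $\ell\le 4$). The entire regime $\ell\ge5$, $2\ell+2\le n\le 3\ell-3$ is left to a heuristic plan ("playing these bounds against each other should rule out the remaining triples"), and that plan does not obviously close: for example with $\ell=10$, $n=22$, $k=1$, a short card can be a radius-$2$ spider with $12$ vertices and $d_C$ anywhere from $3$ to $5$, and none of $|C|\ge 1+d_C(k+1)$, $t_F\le(\ell-1)/(2k+1)$, or the equality analysis of Lemma~\ref{marking} produces a contradiction there. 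So as written there is a genuine gap.

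The missing idea is to improve the \emph{lower} bound on $E_H$ rather than squeeze the card's size. The short card $C$ is an induced subgraph of $H$ as well as of $F$, so $H$ contains $d_C$ $k$-central edges sharing the endpoint $z$; since $H$ has girth at least $2k+3$, at most two of these lie on any one cycle, while every edge of a cycle (of length at least $n-\ell+1$) is $k$-central. Hence $E_H\ge (n-\ell+1)+(d_C-2)$. Pairing this with your own upper bound $E_F\le K_F-1\le d_C+\ell$ (from the tree identity and Lemma~\ref{marking}) and $E_F=E_H$ from Lemma~\ref{kecent}, the $d_C$'s cancel and you get $n\le 2\ell+1$ outright, for every $\ell$ and with no residual cases. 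This is exactly the paper's proof; your draft has all the surrounding machinery but omits the transfer of the card's $d_C$ edge-disjoint $\hk$-paths into $H$, which is the step that makes the count close.
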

\begin{proof}
Suppose that there is an ambiguous deck $\cD$ with reconstructions $F$ and $H$
as we have been discussing.  By Lemma~\ref{k-eball}, no card has diameter
$2k+1$.  Hence by Lemma~\ref{kecent} the number $s'$ of $k$-central edges is
the same in $F$ and $H$.  Let $C$ be a short card, and let $d=d_C$.

Note that $C$ has $d$ $k$-central edges incident to its unique center $z$.
An edge of $F$ in the component containing $z$ is a $k$-central edge if and
only if its endpoint farther from $z$ is a $k$-center.  In other components,
the number of $k$-central edges is less than the number of $k$-centers.
Hence Lemma~\ref{marking} implies $s'\le d+\ell$.

Since $C$ is a card in $\cD$, every reconstruction from $\cD$ has $d$
$k$-central edges with a common endpoint.  In $H$, only two of these can lie
on a particular cycle.  We have observed that $H$ has girth at least $2k+3$,
and hence every edge on a cycle in $H$ is a $k$-central edge.
Lemma~\ref{kecent} now yields $s'\ge n-\ell+1+d-2$.
The upper and lower bounds on $s'$ now require $n\le 2\ell+1$.
\end{proof}

The argument for $n\ge2\ell+2$ in this section is valid for all $\ell$.
In the next section we must restrict to $\ell\ge3$.

\section{The Case $n=2\ell+1$}

We begin with a result of independent interest about special trees.

\begin{definition}
A {\it branch vertex} in a tree is a vertex with degree at least $3$.
A {\it spider} is a tree having at most one branch vertex.
A {\it leg} of a tree is a path in the tree whose endpoints are a leaf and
a branch vertex.  When $d\ge3$, a spider whose branch vertex has degree $d$
is the union of $d$ paths with a common endpoint.  When those paths have
lengths $\VEC m1d$, we denote the spider by $S_{\VEC m1d}$; note that
$S_{\VEC m1d}$ has $1+\SE i1d m_i$ vertices.
\end{definition}

\begin{lemma}\label{spider}
When $n\ge2\ell+1\ge3$, an $n$-vertex spider contains at most $\ell+3$ paths
having exactly $n-\ell$ vertices, except for $S_{1,1,1,1}$ when $\ell=2$.
\end{lemma}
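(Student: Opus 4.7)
The plan is as follows. Set $L = n - \ell - 1$, so $L \ge \ell$ by hypothesis. If the spider has no branch vertex it is $P_n$ and contains exactly $\ell + 1$ paths on $n - \ell$ vertices, well within the bound. So assume the spider has a branch vertex $v$ with $d \ge 3$ legs of lengths $m_1 \ge \cdots \ge m_d \ge 1$ summing to $L + \ell$. Every path with $n - \ell = L + 1$ vertices either lies in a single arm (one leg together with $v$) or passes through $v$ using nontrivial portions of two distinct legs, giving
\[
T \;=\; \sum_i \max(0, m_i - L + 1) \;+\; \sum_{i<j} f(i,j),
\]
where $f(i,j) = |\{(a,b) : a+b=L,\, 1 \le a \le m_i,\, 1 \le b \le m_j\}|$. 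The constraint $\sum m_i = L + \ell$ together with $L \ge \ell$ forces at most one leg to satisfy $m_i \ge L$; otherwise two such legs alone would account for weight at least $2L \ge L + \ell$, leaving no room for a third.

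In the case where $m_1 \ge L$, each $j \ge 2$ has $m_j \le L - 1$, making every $b \in [1, m_j]$ admissible, so $f(1,j) = m_j$. The arm-$1$ term combined with the pairs through leg $1$ equals $(m_1 - L + 1) + \sum_{j \ge 2} m_j = \ell + 1$. The remaining pairs lie inside $\{2, \ldots, d\}$ where $\sum_{j \ge 2} m_j \le \ell \le L$; two disjoint positive pairs would consume weight at least $2L > \ell$, and two overlapping positive pairs would force the shared leg to satisfy $m_i \ge 2L - \ell \ge L$, contradicting $i \ge 2$. Hence at most one residual pair is positive, contributing at most $\ell - L + 1 \le 1$, and this case gives $T \le \ell + 2$.

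In the case where every $m_i < L$, the arm term vanishes and $T = \sum_{i<j} \max(0, m_i + m_j - L + 1)$. For $d = 3$, the substitution $m_i + m_j = L + \ell - m_k$ rewrites $T$ as $\sum_k \max(0, \ell + 1 - m_k) \le 2\ell + 3 - L \le \ell + 3$. For $d \ge 4$, $T$ is convex in $(m_1, \ldots, m_d)$ (a sum of maxes of affine functions), so its maximum over the polytope $\{m : 1 \le m_i \le L - 1,\ \sum m_i = L + \ell\}$ is attained at a vertex, where all but at most one coordinate lies in $\{1, L - 1\}$. Direct evaluation on the parametric families yields $T = 0$ at $(1, \ldots, 1)$ (once $L \ge 3$), $T = \ell + 1$ at $(L-1, 1, \ldots, 1)$, $T = \ell + 3$ at $(L-1, L-1, 1, 1)$ in the unique setting $d = 4,\ L = \ell$, and $T \le \ell + 1$ on the one-free-coordinate variants. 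The only $d \ge 4$ configuration not covered by $L \ge 3$ is the vertex $(1, 1, 1, 1)$ at $L = 2,\ \ell = 2$---namely $S_{1,1,1,1}$---which yields $T = \binom{4}{2} = 6 > \ell + 3$, the announced exception.

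The main obstacle is the $d \ge 4$ vertex enumeration; the convexity reduction cuts it down to a short list of parametric families, and the exceptional spider falls out naturally at the $L = 2$ corner of the polytope.
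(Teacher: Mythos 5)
Your approach is genuinely different from the paper's. The paper argues by induction on $\ell$: if some leaf lies in at most one long path, delete it and invoke the bound for $(n-1,\ell-1)$; otherwise every leaf lies in two long paths, and a direct count over the legs finishes the job. You instead write down an exact counting formula for the number $T$ of long paths (arm terms plus through-pairs at the branch vertex) and bound it by a case analysis on leg lengths, using convexity and vertex enumeration of a polytope when $d\ge4$. Most of this checks out: the decomposition of $T$ is correct, Case 1 ($m_1\ge L$) correctly yields $T\le\ell+2$, and the $d\ge4$ vertex enumeration is complete (no vertex of the polytope can have three or more coordinates equal to $L-1$, and $(L-1,L-1,1,1)$ with $L=\ell$ is the unique extremal configuration apart from the $S_{1,1,1,1}$ corner). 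Your method has the advantage of locating all equality cases explicitly; the paper's induction is shorter but leaves the extremal configurations to a remark.

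There is, however, one step that fails as written, in the $d=3$ subcase. You assert $T=\sum_k\max(0,\ell+1-m_k)\le 2\ell+3-L$; but since $\max(0,x)\ge x$, the sum is at least $2\ell+3-L$, with equality only when every $m_k\le\ell+1$. Concretely, $S_{9,3,1}$ with $\ell=3$ (so $n=14$ and $L=10$) has $T=4$ while $2\ell+3-L=-1$. The conclusion $T\le\ell+3$ is still true and the repair is routine: if all three terms are positive, the sum equals $3(\ell+1)-(L+\ell)=2\ell+3-L\le\ell+3$ since $L\ge\ell$; if some $m_k\ge\ell+1$, then $m_k\le L-1$ gives $m_i+m_j=L+\ell-m_k\ge\ell+1$ for the other two legs, so when both surviving terms are positive they sum to at most $2(\ell+1)-(\ell+1)=\ell+1$, and when at most one is positive it is at most $\ell$. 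With that subcase patched, your proof is correct.
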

\begin{proof}
Let $G$ be an $n$-vertex spider with maximum degree $d$.  Let a {\it long path}
be a path with $n-\ell$ vertices.  We use induction on $\ell$.  When $\ell=1$,
an $n$-vertex tree has at most two paths with $n-1$ vertices, except that when
$n=4$ the spider $S_{1,1,1}$ has three such paths, still less than $\ell+3$.

For $\ell\ge2$, consider first the case that some leaf $x$ lies in at most one
long path.  Let $G'=G-x$.  Let $n'=|V(G')|=n-1$ and $\ell'=\ell-1$.  Since
$n\ge2\ell+1$, we have $n'>2\ell'+1$.  In particular, $(n',\ell')\neq (5,2)$,
so we can apply the induction hypothesis without considering the exception.
Thus $G'$ contains at most $\ell'+3$ paths with $n'-\ell'$ vertices.  That is,
$G$ has at most $\ell+2$ paths with $n-\ell$ vertices avoiding $x$.  Adding (at
most) one long path containing $x$ yields the desired bound for $G$.

In the remaining case, every leaf appears in at least two long paths.  Here we
argue directly, without needing the induction hypothesis.  Let $a$ be the
length of a shortest leg of $G$, and let $x$ be the leaf in a leg of length
$a$.  Since $x$ lies in two path-cards, $G$ must have at least two other legs
of length at least $n-\ell-1-a$, so $d\ge3$.

If $d\ge4$, then some fourth leg (with leaf $y$) also has length at least $a$.
Summing the lengths of these four legs yields $2n-2\ell-2\le n-1$, or
$n\le 2\ell+1$.  Since we consider only $n\ge2\ell+1$, equality holds,
requiring $G=S_{a,a,\ell-a,\ell-a}$ and $n-\ell=\ell+1$.  If $a<\ell-a$, then
exactly four long paths use $x$ or $y$ and a leg of length $\ell-a$, and
$\ell-2a+1$ long paths use the two long legs.  The total is $\ell-2a+5$, which
is at most $\ell+3$ since $a\ge1$.  If $a=\ell-a$, then there is also one long
path from $x$ to $y$, but now exceeding $\ell+3$ requires $a=1$ and $\ell=2$,
which occurs precisely for the exceptional case $S_{1,1,1,1}$.

Hence we may assume $d=3$.  The graph is $S_{a,b,c}$.  To have each leaf in two
path-cards, the lengths of any two legs must sum to at least $n-\ell-1$.  The
number of vertices in the union of two legs is the sum of their lengths plus
$1$, and the last $n-\ell-1$ vertices cannot start a long path.  Hence the
number of long paths is $2(a+b+c)+3-3(n-\ell-1)$, which equals $3\ell-n+4$.
Since $n\ge2\ell+1$, the number of long paths is at most $\ell+3$.
\end{proof}

For $\ell\ge4$, equality in Lemma~\ref{spider} requires $n=2\ell+1$ and occurs
for $S_{a,b,c}$ and for $S_{1,1,\ell-1,\ell-1}$.  When these are excluded, the
bound can be improved to $\ell+1$ except for the special case $S_{2,2,2,2}$
when $\ell=4$, but we will not need this stronger bound.

In the remainder of the paper we restrict to the setting $n=2\ell+1\ge7$.  We
maintain the notation and definitions for $F,H,\cD,\hk,k$ as in the previous
section.  In particular, an ambiguous deck $\cD$ is the $(n-\ell)$-deck of
both an acyclic $n$-vertex graph $F$ and a non-acyclic $n$-vertex graph $H$.
Also $\hk$ is the minimum radius among cards in $\cD$, and $k=\hk-1$.
We again begin by excluding the case $\hk=1$.

\begin{lemma}\label{nostar}
In an ambiguous deck $\cD$, no card is a star.  Hence $\hk>1$ and all
reconstructions have the same degree list.
\end{lemma}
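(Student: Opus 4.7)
The plan is to derive a contradiction from assuming the ambiguous deck $\cD$ contains a star card $K_{1,\ell}$. Let $F$ be the acyclic reconstruction and $H$ the non-acyclic one. Since every card is acyclic, every reconstruction has girth at least $n-\ell+1=\ell+2\ge5$, so both $F$ and $H$ are triangle-free. The existence of a star card forces $H$ to have some vertex $v$ with $\deg_H(v)\ge\ell$, and $H$ also contains a cycle $C^*$ with at least $\ell+2$ vertices. From the $2$-deck we have $|E(H)|=|E(F)|\le n-1=2\ell$. A short edge-count split on whether $v$ lies on $C^*$ forces $v\in V(C^*)$ (otherwise $|E(H)|\ge(\ell+2)+\ell>2\ell$), and once $v$ is inside $C^*$ it forces $|V(C^*)|=\ell+2$, $\deg_H(v)=\ell$, and $|E(H)|=2\ell$. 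In particular, $F$ must be a tree.

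I would then pin down the exact shape of $H$. The $\ell-2$ non-cycle neighbors of $v$ cannot lie on $C^*$, since the resulting chord would create a cycle shorter than $\ell+2$; hence these neighbors sit off $C^*$ and, together with the $\ell+2$ cycle vertices, use up all $2\ell$ edges of $H$. The remaining vertex $u$ of $V(H)$ therefore has no edges and is isolated. Next, I would count star cards: in the triangle-free $H$, the only vertex capable of being the center of a $K_{1,\ell}$ card is $v$, and taking $V\setminus S=\{v\}\cup N_H(v)$ yields exactly one star card. Thus $F$ also has exactly one star card. Since neighbors in a tree form an independent set, each vertex $v'$ of $F$ contributes $\binom{\deg_F(v')}{\ell}$ star cards, so $F$ has exactly one vertex of degree $\ell$ and none of higher degree.

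Both $F$ and $H$ are therefore triangle-free with maximum degree $\ell<n-\ell$, so the count of vertices of degree at least $n-\ell$ is zero in each reconstruction. Corollary~\ref{deg1} then yields that $F$ and $H$ have the same degree list. But $u$ gives $H$ a vertex of degree $0$, whereas the tree $F$ on $n\ge7$ vertices has no isolated vertex---contradiction. Hence no card is a star, which gives $\hk>1$ immediately. The place to be most careful is the tight edge count that pins $H$ down, in particular ruling out the chord scenario; this is where the standing hypothesis $\ell\ge3$ is essential. For the final assertion, absence of a star card means every reconstruction has maximum degree at most $\ell-1<n-\ell$, so Corollary~\ref{deg1} applied to any reconstruction produces the same degree list.
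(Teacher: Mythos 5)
Your proof is correct and follows essentially the same route as the paper: pin down the non-acyclic reconstruction $H$ exactly (unicyclic, with a cycle of length $\ell+2$, one cycle vertex of degree $\ell$ carrying $\ell-2$ pendant neighbors, and an isolated vertex), invoke Corollary~\ref{deg1} to conclude that all reconstructions share a degree list, and derive the contradiction from the isolated vertex of $H$ versus the tree $F$. The only cosmetic difference is that you determine $H$ by an edge count at the star center where the paper counts vertices in the component containing both the star and the cycle; your explicit count of star cards, used to verify the high-degree hypothesis of Corollary~\ref{deg1} in $F$ as well as in $H$, makes explicit a step the paper leaves implicit.
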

\begin{proof}
Suppose that some card is a star, which is equivalent to $\hk=1$.
Since cards have $\ell+1$ vertices, there is no room for a star and a cycle
in separate components of a reconstruction $H$.  Since $H$ has girth at least
$n-\ell+1$, which is at least $5$, a star shares at most three vertices with a
cycle.  When they are in the same component $H'$ of $H$, the cycle must have
exactly $n-\ell+1$ vertices and share exactly three with the star, since
$n-\ell+1+\ell+1=n+2$ and $H$ is disconnected.  With $H'$ having $m$ vertices,
we have $n-\ell+1+\ell-2\le m\le n-1$, so equality holds, and we know $H$
exactly.

In particular, no other star has at least $n-\ell$ vertices.  Thus Manvel's
result (Corollary~\ref{deg1}) applies, and the deck determines the degree list
of every reconstruction.  However, the $2$-deck guarantees that every
reconstruction has the same number of edges.  We have found $H$ to be
unicyclic, with $n-1$ edges and an isolated vertex.  An acyclic reconstruction
$F$ is a tree, with no isolated vertex, so the degree lists are different.

This contradiction implies that no card is a star.  Hence $\hk>1$, and
therefore Corollary~\ref{deg1} again implies that the deck determines the
degree list.
\end{proof}

\begin{lemma}\label{maxdeg}
Graphs with an ambiguous deck $\cD$ have maximum degree at least $3$.
\end{lemma}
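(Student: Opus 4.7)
The plan is to derive the lemma as a one-line consequence of the Spinoza--West reconstructibility theorem quoted in the introduction: for $n \ge 2\ell+1$ with $(n,\ell) \ne (5,2)$, every $n$-vertex graph of maximum degree at most $2$ is $\ell$-reconstructible, meaning it is uniquely determined by its $(n-\ell)$-deck. First I would verify that the hypothesis of this theorem is in force in the present section: since $n = 2\ell+1 \ge 7$, we have $\ell \ge 3$, which automatically excludes the pair $(5,2)$, so the Spinoza--West conclusion is available without caveat.

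The main step is a proof by contradiction. Suppose some reconstruction $G$ from the ambiguous deck $\cD$ has maximum degree at most $2$; then $G$ is a disjoint union of paths and cycles, so Spinoza--West applies and $G$ is the unique $n$-vertex graph with deck $\cD$. However, by ambiguity $\cD$ has two reconstructions, an acyclic $F$ and a non-acyclic $H$, and uniqueness forces $F = G = H$. This is impossible since $G$ cannot simultaneously be acyclic and contain a cycle. The argument applies equally to $G = F$ and to $G = H$, so both $F$ and $H$ must have maximum degree at least $3$.

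Since the proof is essentially the invocation of an already-cited theorem, there is no serious obstacle; the only genuine thing to check is the trivial exclusion of $(n,\ell) = (5,2)$. If a self-contained argument were preferred, one would instead reason directly about the possible structure of a max-degree-$\le 2$ reconstruction: the girth bound $\ge \ell+2$ on $H$ forces it to consist of exactly one cycle of length at least $\ell+2$ together with paths using the remaining vertices, after which one would compare the multiplicities of long path-cards with those produced by the path-forest $F$ to reach a contradiction. This route, however, would duplicate the relevant special case of Spinoza--West, so citing the theorem is cleaner.
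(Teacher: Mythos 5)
Your proof is correct, but it takes a different route from the paper's. You derive the lemma by citing the Spinoza--West theorem as a black box: a reconstruction of maximum degree at most $2$ would be $\ell$-reconstructible (the hypotheses $n\ge 2\ell+1$ and $(n,\ell)\ne(5,2)$ do hold here since $\ell\ge3$), forcing the acyclic reconstruction $F$ and the non-acyclic reconstruction $H$ to be isomorphic, which is absurd. That is a legitimate deduction from the theorem exactly as it is quoted in the introduction (``determined by its $(n-\ell)$-deck'' means determined among all graphs, so uniqueness does apply to every reconstruction, not just those of maximum degree at most $2$). The paper instead gives a short self-contained count: since $n-\ell\ge4$, the deck reveals whether any reconstruction has a vertex of degree at least $3$; if not, $F$ is a disjoint union of paths and hence has at most $\ell+1$ cards isomorphic to $P_{\ell+1}$, whereas the cycle of length at least $\ell+2$ in $H$ already supplies at least $\ell+2$ such cards --- a contradiction. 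This is essentially the ``self-contained alternative'' you sketch at the end, except that the paper needs no structural analysis of $H$ beyond the existence of one long cycle, so it is even lighter than you anticipate. What each approach buys: yours is a one-line reduction to a cited result and requires no new counting, while the paper's keeps the argument elementary and independent of the full strength of \cite{SW}, which is consistent with how the rest of the paper counts path-cards (the same device reappears in Lemma~\ref{no2k+1}). Both are valid; just make sure, if you use your version, that the statement of the Spinoza--West special case you rely on is the unrestricted reconstructibility statement rather than reconstructibility within the class of graphs of maximum degree at most $2$ --- the paper's phrasing supports the former, so you are fine.
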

\begin{proof}
Since $n-\ell\ge4$, we can see in the deck whether there is a vertex of degree
at least $3$ in the reconstructions.  If not, then an acyclic reconstruction is
a subgraph of the path $P_{2\ell+1}$ and hence has at most $\ell+1$ cards that
are paths.  (We have $n-\ell=\ell+1$, and the last $\ell$ vertices cannot start
paths with $\ell+1$ vertices.)  On the other hand, a cycle with at least
$n-\ell+1$ vertices contains at least $\ell+2$ cards that are paths.
Hence for an ambiguous deck maximum degree at least $3$ is required.
\end{proof}

\begin{lemma}\label{kl-ineq}
When $\cD$ is an ambiguous deck, $2\hk\le\ell$.
\end{lemma}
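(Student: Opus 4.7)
Plan: Since every card has $n - \ell = \ell + 1$ vertices and a tree of radius $\hk$ needs at least $2\hk$ vertices, we always have $2\hk \le \ell + 1$; I assume for contradiction that equality holds. Then $\ell$ is odd, $\hk = (\ell+1)/2$, $k = (\ell-1)/2$, and every short card is a tree on $\ell+1$ vertices of radius $(\ell+1)/2$, hence has diameter exactly $\ell$ and is the path $P_{\ell+1}$ (its longest path uses all $\ell+1$ vertices). Such a card has two centers and $d_C = 1$. By Lemma~\ref{marking}, $F$ has at most $1 + d_C + \ell = \ell + 2$ $k$-centers; and since $H$ contains a cycle of length at least $n - \ell + 1 = \ell+2 = 2k+3$ whose vertices are all $k$-centers, $H$ has at least $\ell + 2$ $k$-centers. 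Corollary~\ref{kcent} forces equality in both, so $F$ and $H$ each have exactly $\ell+2$ $k$-centers, and the equality clause of Lemma~\ref{marking} yields that $F$ is a tree.

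Next I analyze the tree $F$. The set $K$ of $k$-centers of $F$ is a connected subtree: for two $k$-centers $u,v$ and any $w$ on the $u$--$v$ path in $F$, $w$ inherits arms of length $\ge k$ on each side by using off-path arms of $u$ and $v$ (both $u$ and $v$ must have such off-path long arms, else each would have only one arm of length $\ge k$). The $\ell - 1 = 2k$ non-$k$-centers then hang off $K$ as subtrees $S_1,\ldots,S_t$. A non-$k$-center $w \in S_i$ attached via $v_i \in K$ already has a $K$-ward arm of length $\ge k$ (through $v_i$ and along one of $v_i$'s long arms away from $w$); being a non-$k$-center it has at most one arm of length $\ge k$, so every arm of $w$ going deeper into $S_i$ has length $< k$. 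In particular, $S_i$ has depth at most $k - 1$ from its top, and the arm of $v_i$ into $S_i$ has length at most $k$.

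The key counting step concerns the leaves of $K$. A leaf $v$ of $K$ has only one $K$-arm, so for $v$ to be a $k$-center the second arm of length $\ge k$ must come from an attached subtree; that attachment therefore contains a chain of length $k-1$ from its top, using at least $k$ vertices. Writing $L_K$ for the number of leaves of $K$, the attachments at leaves consume at least $L_K \cdot k$ vertices out of the non-$k$-center budget of $2k$, so $L_K \le 2$ and $K$ is the path $P_{\ell+2}$. The budget is then fully spent by exactly $k$ vertices arranged as a chain at each endpoint (any non-chain or deeper attachment would use more than $k$ vertices), leaving no room for attachments elsewhere. Hence $F$ is the concatenation of $K$ with two length-$(k-1)$ chains, namely the path $P_{2\ell+1}$, whose maximum degree is $2$ --- contradicting Lemma~\ref{maxdeg}. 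The main obstacle is this leaf-budget calculation, which pins $K$ and the non-$k$-center attachments into chain shape and thereby collapses $F$ to a single path.
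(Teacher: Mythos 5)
Your proof is correct, but it takes a genuinely different and considerably longer route than the paper's. You first observe the trivial bound $2\hk\le\ell+1$ and then kill the extremal case $2\hk=\ell+1$ by a structural collapse: short cards are forced to be $P_{\ell+1}$ with $d_C=1$, so Lemma~\ref{marking} caps the $k$-centers of $F$ at $\ell+2$ while the cycle in $H$ forces at least $\ell+2$, and Corollary~\ref{kcent} pins the count at exactly $\ell+2$ and makes $F$ a tree; your connectivity claim for the set $K$ of $k$-centers, the budget of $2k$ non-centers, and the cost of at least $k$ attached vertices per leaf of $K$ then force $F=P_{2\ell+1}$, contradicting Lemma~\ref{maxdeg}. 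I checked the delicate steps (each $k$-center has a length-$k$ arm avoiding any prescribed first edge, so $K$ is connected; each component of $F-K$ meets $K$ in one edge; a leaf of $K$ needs $k$ non-center vertices of its own) and they all hold, with $k\ge1$ guaranteed since $2\hk=\ell+1$ and $\ell\ge3$. The paper instead counts \emph{non}-$k$-centers directly and gets the full inequality in one stroke: in $H$ at most $\ell-1$ vertices miss the cycle, while in $F$ the $k$ vertices nearest each end of a longest path are not $k$-centers and Lemma~\ref{maxdeg} supplies one further leaf off that path, giving at least $2k+1$ non-centers; Corollary~\ref{kcent} then yields $2k+1\le\ell-1$, i.e.\ $2\hk\le\ell$. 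The paper's observation about the ends of a longest path is the shortcut your argument replaces with the leaf-budget classification; your version proves more about the extremal configuration than is needed, but it is sound.
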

\begin{proof}
By Corollary~\ref{kcent}, $\cD$ determines the number of $k$-centers in any
reconstruction.  When a reconstruction contains a cycle with length at least
$n-\ell+1$, every vertex on it is a $k$-center, so at most $\ell-1$ vertices
are not $k$-centers.  By Lemma~\ref{maxdeg}, an acyclic reconstruction has a
branch vertex, and we know that it also has a path with at least $n-\ell$
vertices.  Hence at least $2k+1$ vertices are not $k$-centers, consisting of
$k$ vertices at each end of a longest path plus one additional leaf.  We
conclude $2k+1\le \ell-1$, which yields $2\hk\le \ell$.
\end{proof}

Recall that by Corollary~\ref{kcent}, $\cD$ determines the number of
$k$-centers in $F$ and $H$.  As in Section 3, we will want to determine the
number of $k$-central edges and even the number of $\hk$-centers, but this
is more difficult when $n=2\ell+1$.  Again let $d_C$ be the maximum number of
edge-disjoint paths of length $\hk$ with common endpoint at a center $z$ of $C$
when $C$ is a short card.  The next two lemmas consider only one type of
reconstruction from $\cD$.

\begin{lemma}\label{ext}
Let $C$ be a short card in an $(n-\ell)$-deck $\cD$ having an acyclic 
reconstruction $F$.  If $F$ has at least $1+d_C+\ell$ $k$-centers, then 
for any two vertices $v_1,v_2 \in V(F)-V(C)$ at the same distance from
a center $z$ of $C$, the $z,v_1$-path and the $z,v_2$-path in $F$ share at most
one edge.  As a consequence, all vertices having distance at least $\hk$ from
$z$ have degree at most $2$.
\end{lemma}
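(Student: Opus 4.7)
The plan is to leverage the equality case of Lemma~\ref{marking}. Since $F$ has at least $1+d_C+\ell$ $k$-centers, Lemma~\ref{marking} must hold with equality, so $F$ is a tree and in the marking argument each of the $\ell$ vertices of $F$ outside $C$ is marked by some additional $k$-center (i.e., a $k$-center other than $z$ or one of the $d_C$ specified neighbors of $z$), with these additional $k$-centers all non-adjacent to $z$. A count then forces the marking to restrict to a bijection between the $\ell$ additional $k$-centers and the $\ell$ outside vertices: for each outside $v$, its unique marker $\mu(v)$ lies at distance $d(z,v)-k$ from $z$ on the $z,v$-path, and since $\mu(v)$ is not adjacent to $z$, in fact $d(z,v)\ge \hk+1$.

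Suppose for contradiction that there exist distinct $v_1,v_2\in V(F)-V(C)$ at common distance $r$ from $z$ whose $z,v_i$-paths share $s\ge 2$ edges, diverging at the vertex $u$ at distance $s$ from $z$. I split on where the would-be markers fall. If $r\le s+k$, then $\mu(v_1)$ and $\mu(v_2)$ both sit on the shared initial segment at distance $r-k$ from $z$, so $\mu(v_1)=\mu(v_2)$; but $v_1\ne v_2$ then forces a single additional $k$-center to mark two distinct outside vertices, contradicting the bijection. Otherwise $r\ge s+k+1$, and I instead consider the vertices $y_1,y_2$ at distance $s+k$ on the two paths. These exist (since $r>s+k$), are distinct (they lie in different branches past the divergence $u$), and lie outside $C$: because $F$ is a tree and $C$ is a connected induced subgraph containing $z$, one has $C\subseteq U_{\hk}(z)$, and $d_F(z,y_i)=s+k\ge k+2>\hk$ pushes $y_i$ outside $C$. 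The unique marker of each $y_i$ must then sit at distance $s$ on its $z,y_i$-path, which on both paths is precisely the shared divergence vertex $u$. Thus $u$ would have to mark both $y_1$ and $y_2$ — impossible, since the marking assigns each $k$-center a single mark. Either case contradicts the assumption $s\ge 2$.

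The consequence is then immediate: if some $w$ at distance $d\ge\hk$ from $z$ had $\deg(w)\ge 3$, then $w$ would have two distinct neighbors $c_1,c_2$ on the side of the tree away from $z$, both at distance $d+1\ge\hk+1>\hk$ and hence outside $C$; the $z,c_i$-paths share $d\ge\hk\ge 2$ edges — where $\hk\ge 2$ comes from Lemma~\ref{nostar} — violating the main claim. I expect the main obstacle to be the second case of the proof by contradiction: spotting the correct auxiliary outside vertices $y_i$ and verifying that they force the common marker $u$. This uses both the tree structure of $F$ (so that distances and paths are unambiguous) and the bijective, not merely injective, form of the marking supplied by the equality in Lemma~\ref{marking}.
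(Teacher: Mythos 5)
Your proof is correct and rests on the same engine as the paper's: equality in Lemma~\ref{marking} forces every vertex outside $C$ to be marked by a distinct $k$-center sitting at distance $k$ toward $z$, and two paths sharing at least two edges force a marker collision. The only organizational difference is that you locate the collision directly (via the auxiliary vertices $y_1,y_2$ whose markers would both be the divergence vertex $u$), whereas the paper takes a minimal counterexample pair and descends to their neighbors; you also spell out the degree-at-most-$2$ consequence, which the paper leaves implicit.
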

\begin{proof}
By Lemma~\ref{marking}, the number of $k$-centers in $F$ is at most
$1+d_C+\ell$, with equality only when $F$ is a tree.  Let $Y$ be a set
of $d_C$ neighbors of $z$ along which edge-disjoint paths of length $\hk$
in $C$ depart from $z$.
By Lemma~\ref{marking}, the marking argument of Definition~\ref{markdef} marks
all vertices of $V(F)-V(C)$ using $k$-centers not in $Y\cup\{z\}$.

If the claim fails, then let $v_1$ and $v_2$ be two vertices of $F-V(C)$
closest to $z$ whose paths from $z$ share at least two edges.  For
$i\in\{1,2\}$, let $P_i$ be the
$z,v_i$-path in $F$, and let $x_i$ be the vertex at distance $k$ from 
$v_i$ along $P_i$.  Since $v_1$ and $v_2$ must be marked by distinct
$k$-centers outside $Y\cup\{z\}$, the vertices $x_1$ and $x_2$ are distinct.
Since $P_1$ and $P_2$ share at least two edges, $x_1$ and $x_2$ have distance
at least $3$ from $z$.  Now $z$ has distance at least $\hk+1$ to the neighbors
of $v_1$ and $v_2$ on the paths, so they are not in $C$, contradicting the
choice of $v_1$ and $v_2$.
\end{proof}

Let $d$ denote the maximum of $d_C$ over short cards $C$.

\begin{lemma}\label{Hcent}
When cards in $\cD$ are acyclic and there is a non-acyclic reconstruction $H$,
the number of $k$-centers and the number of $k$-central edges in $H$ are both
at least $n-\ell+d-1$, which is $d+\ell$ when $n=2\ell+1$.  If also $\cD$
is ambiguous, then $H$ is unicyclic.
\end{lemma}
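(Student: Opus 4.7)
The plan is to prove the two lower bounds by combining the cycle in $H$ with the short-card structure at its center, and then to derive unicyclicity by squeezing $d$ between incompatible bounds coming from edge and $k$-center counts.

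For the lower bounds I would fix a short card $C$ with $d_C=d$ and embed $C$ in $H$ with center $z$; I also fix a cycle $\Gamma\subseteq H$ of length $g\ge n-\ell+1$. Since all cards are acyclic and Lemma~\ref{kl-ineq} gives $2\hk\le\ell$, the girth of $H$ is at least $n-\ell+1\ge 2\hk+2>2k+1$, so every vertex of $\Gamma$ is a $k$-center and every edge of $\Gamma$ is a $k$-central edge, witnessed by walking $k$ steps in each direction along $\Gamma$. The $d$ edges at $z$ beginning edge-disjoint $\hk$-paths in $C$ are $k$-central edges, and the first vertex $u_1$ after $z$ on each such path is a $k$-center, since an edge-disjoint extension through $z$ supplies a length-$k$ path on the $z$-side. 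When $z\in V(\Gamma)$, at most two of these $d$ edges lie on $\Gamma$ and none of the corresponding $u_1$'s lies on $\Gamma$ (a chord at $z$ would violate the girth bound), so $H$ has at least $g+(d-2)\ge n-\ell+d-1$ $k$-central edges and likewise $k$-centers; when $z\notin V(\Gamma)$ the bound is strictly larger. For $n=2\ell+1$ this is $d+\ell$.

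Now assume $\cD$ is ambiguous and suppose for contradiction that $H$ contains two independent cycles. The $2$-deck forces $|E(H)|=|E(F)|\le n-1$, so using $|E(H)|=n-c(H)+r(H)$ with $r(H)\ge 2$ gives $c(H)\ge 3$. Vertex-disjoint cycles, whether in separate components or joined only by a bridge path, would use $\ge 2(n-\ell+1)=2\ell+4>n$ vertices; a figure-$8$ in one component would need $\ge 2(n-\ell+1)-1=2\ell+3$ vertices, exceeding the $n-2=2\ell-1$ vertices available in a single component of $H$. Hence the two cycles share a path, so some component $H_1$ contains a theta subgraph with branch paths of lengths $a,b,c$ whose pairwise sums are at least $\ell+2$; averaging the three cycle lengths gives $a+b+c\ge (3\ell+6)/2$.

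The main step is then to produce incompatible bounds on $d$. Every edge of the theta is $k$-central (each lies on a cycle of length $\ge 2k+3$), and at most three of the $d$ short-card edges at $z$ can coincide with theta edges at $z$, so $|E(H)|\ge (a+b+c)+(d-3)$; combined with $|E(H)|\le 2\ell$ and $a+b+c\ge (3\ell+6)/2$, this yields $d\le \ell/2$. On the other hand, every vertex of the theta is a $k$-center in $H$, and by Corollary~\ref{kcent} together with Lemma~\ref{marking} the common number of $k$-centers in $F$ and $H$ is at most $1+d+\ell$; thus $a+b+c-1\le 1+d+\ell$, forcing $d\ge (\ell+2)/2$. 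Since $(\ell+2)/2>\ell/2$, we obtain the desired contradiction, and $H$ must be unicyclic. The main obstacle I anticipate is precisely this theta case when $\ell\ge 6$, where pure vertex counting no longer rules out a theta subgraph and one genuinely needs the two-sided squeeze on $d$.
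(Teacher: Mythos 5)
Your proof is correct and follows the paper's overall strategy: the lower bounds come from combining the girth-$(n-\ell+1)$ cycle with the $d$ edge-disjoint $\hk$-paths at the center $z$ of a short card, and unicyclicity is refuted via a theta subgraph $R$ with path lengths $a,b,c$ satisfying $2(a+b+c)\ge 3(n-\ell+1)$. Where you diverge is the final squeeze. The paper adds the $d-3$ extra $k$-centers from the short card to the $a+b+c-1$ vertices of $R$ and compares with the bound $1+d+\ell$ from Lemma~\ref{marking}, so that $d$ cancels and one gets $\ell\le4$; it then separately uses $a+b+c\le|E(H)|\le n-1$ to get $\ell\ge6$. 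You keep the two counts apart: the vertex count of $R$ alone gives $d\ge(\ell+2)/2$, while folding the $d-3$ correction into the edge count gives $d\le\ell/2$, a single uniform contradiction valid for all $\ell$. Your version is arguably a bit cleaner: it avoids the case split on $\ell$, and counting the $d-3$ extra objects as \emph{edges} at $z$ (where $\deg_R(z)\le3$ suffices) sidesteps the paper's need to argue that the extra $k$-centers $u_1$ themselves lie outside $R$. One small citation quibble: for the first assertion you invoke Lemma~\ref{kl-ineq}, whose hypothesis is ambiguity, whereas that assertion assumes only acyclic cards plus a non-acyclic reconstruction; the needed inequality $2\hk\le n-\ell$ follows there directly from the path cards supplied by the cycle of $H$, so nothing is lost.
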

\begin{proof}
Let $C$ be a short card with $d_C=d$.  Since $n-\ell>n/2$, the card $C$ lies in
the component of $H$ containing any cycle.  The induced subgraph $C$ has $d$
$k$-centers with a common neighbor and $d$ $k$-central edges with a common
endpoint (in any reconstruction).  At most two of these (in either case) lie on
a cycle in $H$, but with $2\hk\le n-\ell$, all the vertices on a cycle in $H$
are $k$-centers (and its edges are $k$-central edges), and there are at least
$n-\ell+1$ of each.  Hence at least $n-\ell+1+d-2$ vertices in $H$ are
$k$-centers, and at least the same number of edges are $k$-central edges.

If $H$ has more than one cycle, then since each has at least $\ell+2$ vertices
and $H$ has only $2\ell+1$ vertices, any two cycles share at least three
vertices.  When two cycles share at least two vertices, their union contains
two vertices joined by three edge-disjoint paths.  Let $R$ be the union of
three such paths with least total length.  Note that $R$ contains three cycles,
with each edge of $R$ appearing in two of the cycles.  Let the lengths of the
three paths be $a$, $b$, and $c$.  Summing the girth requirement over the three
cycles yields $2(a+b+c)\ge 3(n-\ell+1)$.

There are $a+b+c-1$ vertices in $R$, and every vertex on a cycle is a
$k$-center, since the girth is at least $\ell+2$.
Also, the short card $C$ provides $d$ $k$-centers with a common neighbor,
of which at least $d-3$ are not in $R$, since $R$ has maximum degree
$3$ and is chosen with minimum number of edges.  Hence $H$ has at least
$(a+b+c-1)+(d-3)$ $k$-centers.

By Corollary~\ref{kcent}, all reconstructions have the same number of 
$k$-centers.  By Lemma~\ref{marking}, any acyclic reconstruction has at
most $1+d+\ell$ $k$-centers.  Since $\cD$ is ambiguous, there is such a 
reconstruction.  With $s$ denoting the number of $k$-centers, we obtain
$$
\textstyle{\FR32}(n-\ell+1)+d-4\le a+b+c+d-4\le s\le 1+d+\ell.
$$
With $n-\ell+1=\ell+2$, this simplifies to $3(\ell+2)-8\le 2+2\ell$
and then $\ell\le 4$.

To eliminate the remaining cases of small $\ell$, note that $a+b+c\le n-1$,
since there is an acyclic reconstruction.  Hence
$2n-2\ge 2(a+b+c)\ge 3(n-\ell+1)$.  With $n=2\ell+1$, this simplifies
to $4\ell\ge 3\ell+6$, which requires $\ell\ge6$.

Hence there is no value of $\ell$ that allows $H$ to have more than once cycle.
\end{proof}

We can now prove the analogue of Lemma~\ref{k-eball} for the case $n=2\ell+1$.

\begin{lemma}\label{no2k+1}
If $\cD$ is ambiguous and $n=2\ell+1$, then $\cD$ has no card
with diameter $2k+1$.
\end{lemma}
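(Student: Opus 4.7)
The plan is to suppose for contradiction that some card $C\in\cD$ has diameter $2k+1$. Then $C$ is a tree whose central edge joins the two centers $z$ and $z'$, and $d_C=1$ whether $C$ is viewed from $z$ or from $z'$. First I would apply Lemma~\ref{marking} to $C$ viewed from $z$ to bound the number of $k$-centers of $F$ by $1+d_C+\ell=\ell+2$. Meanwhile, Lemma~\ref{Hcent} makes $H$ unicyclic with a cycle $Z$ of length at least $\ell+2$, and since $2\hk\le\ell$ by Lemma~\ref{kl-ineq}, every vertex of $Z$ is a $k$-center, giving $H$ at least $\ell+2$ $k$-centers. Corollary~\ref{kcent} says the two counts agree, so both equal $\ell+2$, $|Z|=\ell+2$, and $H$ has no $k$-centers off the cycle.

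Equality in Lemma~\ref{marking} then yields that $F$ is a tree and each vertex of $V(F)\setminus V(C)$ is marked by a distinct $k$-center at distance at least $2$ from $z$. Lemma~\ref{ext} now says every vertex of $F$ at distance at least $\hk$ from $z$ has degree at most $2$; the symmetric application from $z'$ gives the same conclusion with respect to $z'$. Consequently every branch vertex of $F$ lies within distance $k$ of both $z$ and $z'$, and the $\ell$ vertices of $V(F)\setminus V(C)$ must form vertex-disjoint paths attached to the boundary of this core.

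To finish, I would leverage this rigid structure to show that $F$ is in fact a spider. Lemma~\ref{spider} then bounds the number of copies of $P_{\ell+1}$ in $F$ by $\ell+3$. On the $H$-side, for each of the $\ell+2$ vertices of $Z$ one obtains a $P_{\ell+1}$-card by deleting that cycle vertex together with all $\ell-1$ extras, and additional path-cards come from deletions that retain some extras in an attached leg, so $H$ contributes strictly more than $\ell+3$ copies of $P_{\ell+1}$ to the deck. Since $F$ and $H$ share $\cD$, this contradicts the spider bound.

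The main obstacle is the spider step: even though all branch vertices are confined to within $k$ of the edge $zz'$, nothing obvious prevents two branch vertices inside that core, for instance one at $z$ and one at $z'$. To pin down a single branch vertex I would revisit the marking-equality condition applied simultaneously from $z$ and from $z'$; the markers assigned to the external vertices must be compatible with the boundary structure seen from each center, which should force all legs of $F$ to emanate from a common branch vertex. Once $F$ is shown to be a spider, Lemma~\ref{spider} together with a routine count of $P_{\ell+1}$-cards in $H$ closes out the contradiction.
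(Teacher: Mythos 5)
Your setup matches the paper's: $d_C=1$, Lemma~\ref{marking} gives $F$ at most $\ell+2$ $k$-centers, Lemma~\ref{Hcent} and Corollary~\ref{kcent} force equality at $\ell+2$ (you should also note this forces $d\le2$, which the paper uses later), and the equality case of Lemma~\ref{marking} plus Lemma~\ref{ext} applied from both centers $z,z'$ confines all branching of $F$ to within distance $k$ of the central edge. The gap is exactly the step you flag as the ``main obstacle'': the hope that the marking conditions force $F$ to be a spider is false, and no revisiting of the marking equality will rescue it. The constraints only control vertices at distance at least $\hk$ from the centers; nothing prevents $C$ itself from having branch vertices at both $z$ and $z'$ (e.g.\ a double broom of diameter $2k+1$), with $F$ obtained by growing one path out of each side. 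That graph satisfies every condition you have derived, is not a spider, and can have far more than $\ell+3$ long paths (the paper's bound for it is $2\ell-2k$, which exceeds $\ell+3$ when $k$ is small), so the Lemma~\ref{spider} comparison cannot close the argument.

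The paper avoids this by splitting on how the $\ell$ external vertices attach to $e$. Only when two external paths reach the \emph{same} endpoint of $e$ does it conclude (using $d\le2$, via the observation that otherwise one could swap leaves to manufacture a short card with $d_{C'}\ge3$) that $C$ is essentially $S_{k,k,k+1}$ and $F$ is a spider or a spider plus one leaf; there Lemma~\ref{spider} applies. In the complementary case it abandons the spider bound entirely and compares long-path counts directly: at most $2\ell-2k$ in $F$ versus at least $3\ell-4k$ in $H$, which conflict because $2k<\ell$. A secondary weakness in your write-up: your claim that $H$ has strictly more than $\ell+3$ path-cards needs the cycle's component to contain vertices off the cycle, which is not automatic and in the paper is extracted from the specific shape and size of $C$ relative to $Q$. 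As written, the proposal does not constitute a proof.
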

\begin{proof}
Let $C$ be a connected card with diameter $2k+1$; note that $d_C=1$.
Since $d_C=1$, by Lemma~\ref{marking} $F$ has at most $\ell+2$ $k$-centers.
By Corollary~\ref{kcent}, $F$ and $H$ have the same number of $k$-centers, $s$.
Let $C'$ be a short card chosen to maximize $d_{C'}$; let $d=d_{C'}$.
By Lemma~\ref{Hcent}, $H$ has at least $d+\ell$ $k$-centers, but also at least
$\ell+2$ along its unique cycle $Q$.  Hence $\ell+2\ge s\ge \ell+\max\{2,d\}$.
If $d\ge3$, then we have a contradiction.

Hence we may assume $d\le2$ and $s=\ell+2$.  Therefore Lemma~\ref{ext}
applies to $C$.  First, by Lemma~\ref{marking}, in the marking argument every
vertex of $F$ outside $C$ is marked by a vertex outside the central edge $e$ of
$C$, and $F$ is a tree.  Vertices within distance $k$ of $e$ cannot be
marked, and therefore $C$ contains all such vertices.

Thus vertices of $F$ outside $C$ have distance at least $\hk$ from each
center of $C$.  Since we can apply Lemma~\ref{ext} with either vertex of the
central edge $e$ being the center $z$, the paths in $F$ from any two vertices
outside $C$ to $e$ cannot meet before reaching $e$.

{\bf Case 1:} {\it Two paths from vertices outside $C$ reach $e$ at the same
endpoint $z$.}
Let $v$ and $v'$ be the vertices at distance $\hk$ from $z$ on these two paths.
Since $C$ has diameter $2k+1$, a third path of length $\hk$ reaches $z$ through
$e$.  If $C$ has vertices $w$ and $w'$ outside these three paths, then deleting
$\{w,w'\}$ and adding $\{v,v'\}$ produces a short card exhibiting $d\ge3$, a
contradiction.  Hence $C$ is either the spider $S_{k,k,k+1}$ with three legs
and $3k+2$ vertices or is obtained from $S_{k,k,k+1}$ by adding one vertex.

By Lemma~\ref{ext}, $F$ has no branch vertices outside $C$, and 
no vertex of $C$ has two neighbors outside $C$.  Thus $F$ is obtained from $C$
only by extending paths from leaves of $C$ at distance $k$ from $e$.  Hence $F$
also is a spider or a spider plus one extra vertex.

Again call a path with $n-\ell$ vertices a {\it long path}.  If $F$ is a 
spider, then by Lemma~\ref{spider} $F$ has at most $\ell+3$ long paths,
since $n=2\ell+1$ with $(n,\ell)\ne(5,2)$.  If $F$ consists of a spider plus
one leaf $v$, then by Lemma~\ref{spider} $F-v$ has at most $\ell+2$ paths with
$n-\ell$ vertices, since $n-\ell = (n-1)-(\ell-1)$ and $n-1>2(\ell-1)+1$
with $(n-1,\ell-1)\ne(5,2)$.  Since $F-v$ is a spider with branch vertex of
degree $3$, adding $v$ adds at most three long paths, so $F$ has at most
$\ell+5$ long paths.

By Lemma~\ref{Hcent}, $H$ has a unique cycle $Q$.  With $n=2\ell+1$, there
are at least $\ell+2$ vertices in $Q$.  Since $H$ is disconnected, the
component $H'$ of $H$ containing $Q$ has at most $\ell-2$ vertices outside $Q$.
Hence every vertex of $H'$ has distance at most $\ell-2$ from $Q$.  From each
vertex of $H'$ outside $Q$, we can follow a shortest path to $Q$ and then turn 
either direction along $Q$ to complete a long path.  With also at least
$\ell+2$ long paths in $Q$, the number of long paths in $H$ is at least
$\ell+2+2t$, where $t$ is the number of vertices of $H'$ outside $Q$.

Since $H'$ is unicyclic, $C\cap Q$ is a single path along $Q$.  Since
$C$ has diameter $2k+1$, it shares at most $2k+2$ vertices with $Q$.
With $\C{V(C)}\in\{3k+2,3k+3\}$, we obtain $t\ge k$ if $F$ is a spider, and
$t\ge k+1$ if $F$ is a spider plus one vertex.  Thus $H$ has at least
$\ell+2+2k$ long paths in the former case and at least $\ell+4+2k$ in the
latter.  With $k\ge1$, in each case $H$ has more long paths than $F$, which is
a contradiction since the long paths are cards in $\cD$.

{\bf Case 2:} {\it Each endpoint of $e$ is reached by at most one path from
outside $C$.}
In this case, $F$ extends $C$ by at most two paths, one grown from each end of
$e$.  The path joining them in $C$ has $2k+2$ vertices.  Together, these paths
form a path $P$ with $\ell+2k+2$ vertices.  It contains only $2k+2$ long paths,
since the last $\ell$ vertices cannot start a path with $\ell+1$ vertices.  The
vertices of $F$ outside $P$ are $\ell+1-(2k+2)$ vertices of $C$.  Each such
vertex can start a long path only by traveling to $P$ and turning one way or
the other along $P$.  Hence the total number of long paths in $F$ is at most
$2(\ell+1)-(2k+2)$, which equals $2\ell-2k$.

In $H$, again $C$ lies in the component $H'$ containing $Q$.  Any vertex
$v\in V(H')-V(Q)$ is within distance $\ell-1$ from $Q$.  Hence from $v$ a long
path can be followed to $Q$ and then along $Q$ in either direction.  Also at
least $n-\ell+1$ long paths lie in $Q$.  As before, $C$ shares at most $2k+2$
of its $\ell+1$ vertices with $Q$, since $H$ is unicyclic.  Hence the total
number of long paths in $H$ is at least $(n-\ell+1)+2(\ell+1-2k-2)$, which
equals $3\ell-4k$.

Since $2k+2\le n-\ell=\ell+1$, we have $2k<\ell$, which is equivalent to
$2\ell-2k<3\ell-4k$.
\end{proof}

We may henceforth assume that all short cards have diameter $2k+2$, so they are
$\hk$-vines.

\begin{lemma}\label{hkvine}
When $\cD$ is ambiguous and $n=2\ell+1$, every reconstruction has
$d+\ell$ $k$-central edges, every acyclic reconstruction $F$ has at least
$1+d+\ell$ $k$-centers, and every $\hk$-vine in $F$ has at most $n-\ell$
vertices.
\end{lemma}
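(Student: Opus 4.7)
My plan is to combine Lemmas~\ref{Hcent}, \ref{kecent}, and~\ref{marking} with the following structural identity: in any tree the $k$-centers induce a connected subtree whose edges are exactly the $k$-central edges. (Both endpoints of a $k$-central edge are $k$-centers; conversely, if two adjacent $k$-centers $u,v$ had $uv$ not $k$-central, then one of them would lack a second length-$k$ branch and fail to be a $k$-center. Connectedness follows because any vertex on a $u,v$-path between two $k$-centers inherits length-$k$ branches in both directions by concatenating initial segments with branches of $u$ and $v$ away from the path.) Thus, for any forest $F$, the number of $k$-centers equals the number of $k$-central edges plus the number of components containing at least one $k$-center.

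For the first two claims, Lemma~\ref{no2k+1} rules out cards of diameter $2k+1$, so Lemma~\ref{kecent} applies and all reconstructions share a common number $s'$ of $k$-central edges; Lemma~\ref{Hcent} yields $s'\ge d+\ell$. Choose a short card $C$ with $d_C=d$; Lemma~\ref{marking} gives at most $1+d+\ell$ $k$-centers in $F$. The component of $F$ containing $C$ has a $k$-center (its center $z$), so the identity yields $s'\le d+\ell$. Hence $s'=d+\ell$, establishing (i), and the identity forces $F$ to have exactly $1+d+\ell$ $k$-centers, proving (ii). The equality clause of Lemma~\ref{marking} then gives that $F$ is a tree, and forces $d_{C'}=d$ for \emph{every} short card $C'$, since any $C'$ with $d_{C'}<d$ would bound the number of $k$-centers by $1+d_{C'}+\ell<1+d+\ell$.

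For (iii), I argue by contradiction: suppose $F$ has an $\hk$-vine with at least $\ell+2$ vertices centered at some $w$. Let $B'$ be the $\hk$-ball at $w$ in $F$; then $B'$ is itself an $\hk$-vine with $|V(B')|\ge\ell+2$. Fix a diametral path $P$ of length $2\hk$ through $w$ in $B'$; Lemma~\ref{kl-ineq} gives $|V(P)|=2\hk+1\le\ell+1$. I form an $(\ell+1)$-vertex induced subtree $C''$ of $F$ by deleting the at most $\ell-1$ vertices outside $B'$ and then iteratively deleting leaves of the remaining subtree that lie off $P$; this is always possible since any proper supertree of $P$ has a leaf outside $P$, and the process terminates at exactly $\ell+1$ vertices. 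Then $C''$ is a short card with center $w$, so $d_{C''}=d$ and the equality case of Lemma~\ref{marking} applies with $z=w$: each of the $\ell$ vertices outside $C''$ is marked by a distinct $k$-center $x$ not adjacent to $w$, and its mark $x'$ satisfies $d(w,x')=d(w,x)+k\ge\hk+1$, placing $x'$ outside $B'$. Since the marking is injective, at least $\ell$ vertices of $F$ lie outside $B'$, contradicting $|V(F)-V(B')|\le\ell-1$.

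The main obstacle is arranging the trimming in (iii) so that $C''$ is a genuine short card centered at $w$: one must verify that iterative leaf-deletions respecting $P$ can reach exactly $\ell+1$ vertices. This succeeds because $P$ persists throughout and any subtree properly containing $P$ offers a leaf outside $P$ to remove; everything else is a routine consequence of the structural identity and repeated application of Lemma~\ref{marking}.
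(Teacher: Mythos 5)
Your treatment of the first two claims is correct and is essentially the paper's argument, packaged more cleanly: the identity that in a forest the $k$-centers induce a subtree (per component) whose edges are exactly the $k$-central edges is valid (both directions of the edge correspondence and the connectedness check go through in a tree), and combining it with Lemmas~\ref{kecent}, \ref{Hcent}, and~\ref{marking} does yield $s'=d+\ell$ and exactly $1+d+\ell$ $k$-centers in $F$. Your observation that every short card $C'$ must then have $d_{C'}=d$ is also correct and is implicitly used by the paper.

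The third claim is where your proof has a genuine gap. Your entire contradiction rests on the assertion that every vertex outside $C''$ is marked by a $k$-center \emph{not adjacent to} $w$, hence lies at distance at least $\hk+1$ from $w$ and so outside the ball $B'$. You are quoting the equality clause of Lemma~\ref{marking} literally, but that clause is stronger than what the marking argument actually delivers: the proof of Lemma~\ref{marking} only shows that in the equality case each vertex outside $C$ is marked by a $k$-center outside $\{z\}\cup Y$, where $Y$ is the set of $d_C$ neighbors of $z$ starting the edge-disjoint $\hk$-paths in $C$. A neighbor $y$ of $z$ with $y\notin Y$ can perfectly well be one of the ``additional'' $k$-centers, and it marks a vertex at distance exactly $\hk$ from $z$ --- outside $C$ (else $d_C$ would not be maximal) but possibly \emph{inside} the $\hk$-ball. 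This is not a hypothetical quibble: the paper itself uses only the weaker form elsewhere (in Lemma~\ref{ext} it claims the markers avoid $Y\cup\{z\}$, not all neighbors of $z$), and its own proof of Lemma~\ref{hkvine} explicitly confronts the configuration you exclude --- the extra vertex $v$ of the oversized $\hk$-vine is found to be ``marked by a neighbor of $z$'' at distance exactly $\hk$. Once even one vertex of $B'\setminus C''$ is marked this way, your count gives only $\ell-t$ vertices forced outside $B'$ for some $t\ge1$, which is compatible with $|V(F)-V(B')|\le\ell-1$, and the contradiction disappears.

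Closing that case is the real work of the lemma: the paper shows that in this situation the card must be the spider $S_{\hk,\dots,\hk,k}$ with the extra vertex extending the short leg, deduces via Lemma~\ref{ext} that $F$ is (essentially) a spider, and then derives a contradiction by comparing the number of path cards in $F$ (bounded by Lemma~\ref{spider}) with the number forced in the unicyclic reconstruction $H$. Your argument for claim (iii) never uses $H$ beyond what was needed for claims (i) and (ii), which is a sign that something is missing, since the paper's proof of (iii) genuinely needs the long-path comparison with $H$. So: parts (i) and (ii) stand; part (iii) needs the case of marks by neighbors of $w$ handled, and that requires substantially more argument than your sketch contains.
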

\begin{proof}
By Lemma~\ref{no2k+1}, no card has diameter $2k+1$.  Hence every $k$-evine has
at most $n-\ell-1$ vertices, and by Lemma~\ref{kecent} the number $s'$ of
$k$-central edges is the same in $F$ and $H$.  By Lemma~\ref{Hcent}, $H$ has at
least $d+\ell$ $k$-central edges.  Hence $F$ also has $d+\ell$ $k$-central
edges.

Since any short card has diameter $2k+2$, it has a unique central vertex,
and this vertex is a $k$-center.  In every $k$-central edge in the card, the
endpoint farther from the center of the card is also a $k$-center.  Hence $F$
has at least $1+d+\ell$ $k$-centers.

If $F$ contains a $\hk$-vine with more than $n-\ell$ vertices, then we can
delete vertices to obtain a $\hk$-vine $B$ with $n-\ell+1$ vertices (since
$2\hk\le n-\ell$).  A longest path in $B$ has $2k+3$ vertices.  Deleting a leaf
of $B$ yields a card of $F$.  Since no card has diameter $2k+1$, $B$ has a leaf
$v$ such that $B-v$ is a card $C$ that is a $\hk$-vine.  Let $z$ be the common
center of $B$ and $C$.

Under the marking argument, vertices at distance $\hk$ from $z$ are marked only
by $k$-centers adjacent to $z$.  Since all vertices of $C$ are within distance
$\hk$ of $z$, at most $d$ vertices of $C$ and the $\ell$ vertices of
$V(F)-V(C)$ can be marked.  Hence $F$ has at most $1+d_C+\ell$ $k$-centers.

We conclude that equality holds, so $d_C=d$ and every vertex outside $C$ is
marked.  Now we ask where is $v$?  Since $v\in V(B)$, it is within distance
$\hk$ of $z$; to be marked it must have distance exactly $\hk$ from $z$, 
marked by a neighbor of $z$.  Thus if $C$ has any leaf vertex $w$ that is
not on one of the $d$ edge-disjoint paths from $z$ or the $z,v$-path, then
replacing $w$ with $v$ in $C$ yields a card $C'$ with $d_{C'}>d$.  By the
choice of $d$, we conclude that $C$ is a spider with $d$ legs of length
$\hk$ and one leg of length $k$, and $v$ extends that leg to length $\hk$.

All vertices outside $B$ have distance greater than $\hk$ from $z$.
By Lemma~\ref{ext}, $F$ is a spider and has at most $\ell+3$ cards that are
paths.  The unique cycle $Q$ in $H$ contains at least $\ell+2$ cards that are
paths.  With one more vertex in the component $H'$ of $H$ containing $Q$,
we obtain $\ell+4$ such cards and a contradiction.  Since no short card has
diameter $2k+1$, we have $d\ge2$, and the card $C$ has at least two paths of
length $\hk$ and one path of length $k$ from the center, edge-disjoint.  Also
$C$ and $Q$ each contain more than half of $V(H)$ and must intersect.  Now the
vertex of degree at least $3$ in $C$ guarantees a vertex of $H'$ outside $Q$.
\end{proof}

\begin{lemma}\label{Hvine}
When $\cD$ is ambiguous and $n=2\ell+1\ge7$, in any non-cyclic reconstruction
$H$ no $\hk$-vine has more than $n-\ell$ vertices.
\end{lemma}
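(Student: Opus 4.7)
The plan is to argue by contradiction and mirror the proof of Lemma~\ref{hkvine}, but because the marking argument requires a forest, we will transfer structural information from $H$ through the deck to $F$ rather than apply marking to $H$ directly. First I would suppose that $H$ contains a $\hk$-vine with more than $n-\ell$ vertices, and, as in the proof of Lemma~\ref{hkvine}, trim it to a $\hk$-vine $B^{*}$ with exactly $\ell+2$ vertices, then choose a leaf $v$ of $B^{*}$ off some diametric path so that $C=B^{*}-v$ is a $\hk$-vine card of size $\ell+1$; the existence of such $v$ follows from Lemma~\ref{no2k+1}. Let $z$ denote the common center of $B^{*}$ and $C$ in $H$. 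Since $C\in\cD$, the forest $F$ contains an induced copy $C_F$ with center $z_F$, and by Lemma~\ref{hkvine} this copy is a maximal $\hk$-vine in $F$, so $U_{\hk}(z_F)=V(C_F)$.

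Next I would apply Lemma~\ref{marking} in $F$ with short card $C_F$. Combined with the lower bound $1+d+\ell$ on $k$-centers from Lemma~\ref{hkvine}, this forces $d_C=d$ and equality in Lemma~\ref{marking}, so every vertex of $V(F)\setminus V(C_F)$ is marked. Lemma~\ref{ext} then constrains $F$: vertices at distance at least $\hk$ from $z_F$ have degree at most two in $F$, and any two vertices of $V(F)\setminus V(C_F)$ at equal distance from $z_F$ have $z_F$-paths sharing at most one edge. The main new step is to pin down the structure of $B^{*}$ in $H$. For every leaf $w$ of $B^{*}$ for which $B^{*}-w$ remains a $\hk$-vine, the induced subgraph $B^{*}-w$ is a card in $\cD$, so its $d$-value is at most $d$. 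Combined with the inequality $d_{B^{*}-w}\ge d_{B^{*}}-1$ and $|V(B^{*})|=\ell+2$, this forces $d_{B^{*}}\in\{d,d+1\}$. Applying the exchange argument used in Lemma~\ref{hkvine}'s proof (replacing a hypothetical misplaced leaf of $C$ by $v$ inside the card $B^{*}-w$) then forces one of two shapes: either (i) $B^{*}\cong S_{\hk,\ldots,\hk}$ with $d+1$ legs and $(d+1)\hk=\ell+1$, or (ii) $C\cong S_{\hk,\ldots,\hk}$ with $d$ legs (so $d\hk=\ell$) and $v$ attaches as a side leaf at an interior vertex of one leg of $C$.

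In either case, feeding the shape of $C$ back into Lemma~\ref{ext} shows that $F$ is a spider centered at $z_F$, and Lemma~\ref{spider} bounds the number of $(\ell+1)$-vertex path cards of $F$ by $\ell+3$ (the exceptional case of Lemma~\ref{spider} does not arise since $(n,\ell)\ne(5,2)$ and $\ell\ge 4$). On the other hand, the unique cycle $Q$ of $H$ alone contributes $|Q|\ge\ell+2$ long path cards entirely within $Q$. Since $|V(B^{*})\cap V(Q)|\le 2\hk+1$ and $|V(B^{*})|=\ell+2$, we have $|V(B^{*})\setminus V(Q)|\ge \ell-2\hk+1\ge 1$, so at least one vertex of $H'$ lies outside $V(Q)$; each such vertex at tree-distance at most $\ell$ from $V(Q)$ contributes two long path cards that extend from it along $Q$ in opposite directions. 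Summing gives at least $\ell+4$ long path cards in $H$, which exceeds the bound for $F$ and contradicts the deck equality.

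The main obstacle is the structural step pinning down $B^{*}$. Lemma~\ref{hkvine}'s original argument placed the analogous extra vertex inside the forest, where marking applies directly; here $v$ lives only in $H$, so marking in $F$ cannot locate it, and we must extract the structural information through the deck by inspecting the cards $\{B^{*}-w\}$ for the removable leaves $w$ of $B^{*}$. We then have to handle the case split between the two shapes of $B^{*}$ separately. A further sub-case analysis arises in the long-path count in $H$, depending on whether the center $z$ of $B^{*}$ lies on $Q$ or inside a tree attached to $Q$, which affects how $B^{*}$ overlaps with $Q$ and how many tree-extension long paths actually arise.
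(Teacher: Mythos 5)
Your approach is genuinely different from the paper's, but it has a gap at its central step: the structural classification of $B^{*}$. In Lemma~\ref{hkvine} the exchange argument works because the marking argument, applied to the forest $F$, first locates the extra vertex $v$ at distance exactly $\hk$ from the center $z$; only then does swapping a leaf $w$ for $v$ create a new length-$\hk$ path and force $d_{C'}>d$. Here $B^{*}$ lives in $H$, and the only information you extract from the deck is that $d_{B^{*}-w}\le d$ for every removable leaf $w$, which gives $d_{B^{*}}\le d+1$ and nothing more. That does not force shape (i) or (ii). For instance, with $\hk=2$ and $d=2$, take $B^{*}$ to be the tree with center $z$ adjacent to $a$ and $b$, where $a$ and $b$ each have two leaf children ($\ell+2=7$ vertices, diameter $4$): every leaf deletion leaves a $\hk$-vine with $d$-value $2$, so all your deck constraints are satisfied, yet $B^{*}$ is neither $S_{\hk,\ldots,\hk}$ with $d+1$ legs nor a $d$-legged spider plus a side leaf. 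The downstream step also fails: even when $C$ is a spider as a graph, Lemma~\ref{ext} only controls $F$ at distance at least $\hk$ from $z_F$ and lets paths grow from \emph{each} leaf of $C_F$ at distance $\hk$; if two such leaves share a branch (as in the example, where $a$ has two far children), $F$ retains a branch vertex other than $z_F$ and is not a spider, so Lemma~\ref{spider} does not apply and the long-path comparison collapses. You also leave unresolved the sub-cases you flag at the end (whether $z\in V(Q)$, and the spider-plus-one-vertex bound $\ell+5$ from Case 1 of Lemma~\ref{no2k+1}), which would need the count in $H$ pushed higher than $\ell+4$.

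For comparison, the paper avoids all structural analysis of $B^{*}$. It exploits the ledger set up in Lemma~\ref{hkvine}: $F$ has exactly $d+\ell$ $k$-central edges but at least $1+d+\ell$ $k$-centers, and both counts transfer to $H$ by Lemma~\ref{kecent} and Corollary~\ref{kcent}. Inside the component $H'$ containing the cycle $Q$, the numbers of $k$-centers and $k$-central edges are equal, so the surplus $k$-center must sit in a separate component, forcing $H=H'+P_{2k+1}$ and forcing a short card to meet $Q$ in exactly $2k+3$ vertices. A $\hk$-vine in $H$ then lies in $H'$, meets $Q$ in at most $2\hk+1$ vertices, and therefore omits at least $\ell$ vertices of $H$, giving the bound directly. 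If you want to salvage your route, you would need a replacement for the marking step that locates $v$ within $B^{*}$ using only deck-visible data; I do not see how to get that, whereas the counting route closes the argument cleanly.
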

\begin{proof}
By Lemma~\ref{hkvine}, every reconstruction has $d+\ell$ $k$-central edges,
where $d$ is the maximum of $d_C$ over short cards $C$.  By Lemma~\ref{Hcent},
$H$ has a unique cycle, $Q$.

Let $H'$ be the component of $H$ containing $Q$.  Every card has $\ell+1$
vertices and hence intersects $Q$, so $H'$ contains a short card $C$.  Since
$H$ is unicyclic, $C\cap Q$ is connected.  Since $C$ has diameter $2k+2$, we
conclude that $C$ shares at most $2k+3$ vertices with $Q$.

Now $|V(H')|\ge (n-\ell+1)+(n-\ell)-(2k+3) = n-(2k+1)$.  Thus at most $2k+1$
vertices lie outside $H'$.  If some $k$-center lies outside $H'$, then there is
only one, and its component is a path with $2k+1$ vertices.

Each vertex on $Q$ is a $k$-center, and each edge on $Q$ is a $k$-central edge.
For every $k$-center $v$ in $H'-V(Q)$, the edge leaving $v$ on the path to $Q$
is a $k$-central edge (and the endpoint farther from $Q$ in any $k$-central
edge of $H'-V(Q)$ is a $k$-center).  Thus the difference between the numbers of
$k$-centers and $k$-central edges is the same in $H'$ as in $Q$, where it is
$0$.

We proved in Lemma~\ref{hkvine} that $F$ and $H$ have the same number of
$k$-central edges and that $F$ has more $k$-centers than $k$-central edges.
We also know from Corollary~\ref{kcent} that $F$ and $H$ have the same number
of $k$-centers.  Hence $H$ also has more $k$-centers than $k$-central edges.
This requires $H=H'+P_{2k+1}$, with $C$ and $Q$ sharing $2k+3$ vertices in $H'$.

Now consider in $H$ a $\hk$-vine $B$ with center $v$.  Since $B$ is connected
with at least $2\hk+1$ vertices, it must lie in $H'$ and omit the $2k+1$
vertices of the outside path.  Since $H'$ is unicyclic, $B\cap Q$ is connected.
Being a tree with diameter $2\hk$, the tree $B$ contains at most $2\hk+1$
vertices among the $\ell+2$ vertices of $Q$.  Therefore, $B$ omits at least 
$\ell+2-(2k+3)$ vertices of $H'$ and $2k+1$ vertices outside $H'$,
which means $B$ has at most $n-\ell$ vertices.
\end{proof}

\begin{theorem}
For $n\ge2\ell+1\ge7$, the family of $n$-vertex acyclic graphs is
$\ell$-recognizable.
\end{theorem}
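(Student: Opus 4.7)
The plan is to dispatch the remaining case $n = 2\ell+1$ by comparing the number of $\hk$-centers in an alleged pair of reconstructions, following the template already used for $k$-centers in Section~3 but with the family of $\hk$-vines. By Theorem \ref{2l+2} the conclusion is already known for $n \ge 2\ell+2$, so it suffices to rule out an ambiguous $(n-\ell)$-deck $\cD$ with $n = 2\ell+1 \ge 7$. Let $F$ and $H$ be acyclic and non-acyclic reconstructions. From the earlier lemmas of Section~4 I would collect that $\hk \ge 2$ (Lemma \ref{nostar}), $2\hk \le \ell$ (Lemma \ref{kl-ineq}), and no card has diameter $2\hk-1$ (Lemma \ref{no2k+1}), so every short card is a $\hk$-vine with a unique center. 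Moreover, $H$ is unicyclic (Lemma \ref{Hcent}) with its unique cycle $Q$ of length at least $\ell+2$, and by Lemmas \ref{hkvine} and \ref{Hvine} no $\hk$-vine in $F$ or in $H$ has more than $n-\ell$ vertices.

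The next step is to apply Lemma \ref{counting} to the absorbing family of $\hk$-vines. Absorption for $F$ is automatic, and for $H$ it follows from Lemma \ref{buttermax} since the girth of any reconstruction is at least $n-\ell+1 \ge 2\hk+2$. The $\hk$-vines with $n-\ell$ vertices in a reconstruction are exactly its short cards; each is automatically maximal since no larger $\hk$-vine exists, and its multiplicity is read off $\cD$. Lemma \ref{counting} then determines $m(B,G)$ for every $\hk$-vine $B$, and in particular the total number of $\hk$-centers in any reconstruction from $\cD$ is determined.

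The final step is to derive contradictory bounds on this number. For the upper bound, I would adapt the marking argument of Definition \ref{markdef} from $k$-centers to $\hk$-centers: fix a short card $C$ with center $z$, let $F'$ be the component of $F$ containing $C$, and for each $\hk$-center $x \ne z$ in $F'$ choose a path of length $\hk$ from $x$ in $F$ that does not start along the edge toward $z$; such a path exists because $x$ has two edge-disjoint $\hk$-paths from itself. The far endpoint $x'$ is a mark at distance $d(z,x)+\hk > \hk$ from $z$, so $x'$ lies in $V(F')-V(C)$, and the marks are distinct by the tree structure of $F'$. Counting $z$, the $\hk$-centers marked in $F'$, and (crudely) all vertices in other components of $F$, this gives at most $1 + \ell$ $\hk$-centers in $F$. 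For the lower bound, $|V(Q)| \ge \ell+2 \ge 2\hk+1$ forces every vertex of $Q$ to be a $\hk$-center, so $H$ has at least $\ell+2$ $\hk$-centers. Since $1+\ell < \ell+2$, the deck-invariance of the $\hk$-center count is contradicted.

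The main obstacle is not the final counting but the structural groundwork: Lemmas \ref{hkvine} and \ref{Hvine}, which cap all $\hk$-vines in both reconstructions at $n-\ell$ vertices, are essential and rest on the careful analysis of cycles and $k$-centers developed throughout Section~4. Once those caps are available, the marking argument for $\hk$-centers is actually tighter than the $k$-center analogue in Lemma \ref{marking} --- the extra step in distance pushes every mark strictly outside $C$, eliminating the $d_C$ term and collapsing the bound $1+d_C+\ell$ to $1+\ell$, which is exactly small enough to be beaten by the $\ell+2$ cycle vertices in $H$.
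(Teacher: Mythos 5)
Your proposal is correct and follows essentially the same route as the paper: reduce to $n=2\ell+1$, use Lemmas~\ref{hkvine} and~\ref{Hvine} to cap all $\hk$-vines at $n-\ell$ vertices so that Lemma~\ref{counting} (via Lemma~\ref{buttermax}) determines the number of $\hk$-centers, and then contradict deck-invariance by the modified marking bound of $1+\ell$ for $F$ against the $\ell+2$ cycle vertices of $H$. The only cosmetic difference is that the paper nominally cites Corollary~\ref{kcent} for the $\hk$-center count where you (more accurately) invoke Lemma~\ref{counting} directly.
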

\begin{proof}
By Theorem~\ref{2l+2}, we may assume $n=2\ell+1$.  By Lemmas~\ref{hkvine}
and~\ref{Hvine}, in every reconstruction no $\hk$-vine has more than $n-\ell$
vertices.  Thus all $\hk$-vines are seen in the deck.  The deck then provides
the number of maximal $\hk$-vines with $n-\ell$ vertices, and none are larger.
Also any reconstruction has girth at least $n-\ell+1$, which by
Lemma~\ref{kl-ineq} is at least $2\hk+2$.  Hence by Corollary~\ref{kcent}
$\cD$ determines the number of $\hk$-centers in any reconstruction.

Any short card $C$ has radius $\hk$ and diameter $2\hk$, so it is a $\hk$-vine
with a unique center $z$.  We now modify the marking argument of 
Definition~\ref{markdef} and Lemma~\ref{marking} so that in $F$ each
$\hk$-center $x$ other than $z$ marks a vertex at distance $\hk$ from $x$ along
an extension of the $z,x$-path in $F$.  Since $C$ has radius $\hk$, the marked
vertices are outside $C$, and a vertex can only be marked by one $\hk$-center.
Hence the number of $\hk$-centers in $F$ is at most $1+\ell$.

However, every vertex on a cycle in $H$ is a $\hk$-center, since
$2\hk\le n-\ell$, so the number of $\hk$-centers in $H$ is at least $n-\ell+1$,
which equals $\ell+2$.  This contradicts that $F$ and $H$ have the same number
of $\hk$-centers and completes the proof.
\end{proof}

\end{document}